%
%
%
%

%
\documentclass[manmat]{svjour}
%
\usepackage{latexsym}
\usepackage{graphics}
\usepackage{amsmath}
\usepackage{amscd}
\usepackage{amssymb}
\usepackage{times}

\begin{document}

\sloppy

\spnewtheorem*{OortConj}{Oort Conjecture}{\bf}{\it}
\spnewtheorem*{StrongOC}{Ring Specific Local Oort Conjecture}{\bf}{\it}
\spnewtheorem*{LGP}{Local-to-Global Principle}{\bf}{\it}
\spnewtheorem*{LocGR}{Local Criterion for Good Reduction}{\bf}{\it}
\spnewtheorem*{BiratLL}{Birational Criterion for Local Lifting}{\bf}{\it}
\spnewtheorem*{Weier}{Weierstrass Preparation Theorem}{\bf}{\it}
\spnewtheorem*{ArithOort}{Arithmetic Form of the Ring Specific Local Oort Conjecture}{\bf}{\it}
\spnewtheorem*{ArithOortFp}{Arithmetic Form of the Ring Specific Local Oort Conjecture}{\bf}{\it}

\spnewtheorem{thm}{Theorem}{\bf}{\it}
\spnewtheorem{cor}[thm]{Corollary}{\bf}{\it}
\spnewtheorem{lem}[thm]{Lemma}{\bf}{\it}
\spnewtheorem{prop}[thm]{Proposition}{\bf}{\it}
\spnewtheorem{defin}[thm]{Definition}{\bf}{\it}
\spnewtheorem{rem}[thm]{Remark}{\bf}{\rm}
\spnewtheorem{examp}[thm]{Example}{\bf}{\it}

\numberwithin{equation}{section}
\numberwithin{thm}{section}

\newcommand{\f}{\mathfrak}
\newcommand{\mb}{\mathbb}
\newcommand{\mr}{\mathrm}
\newcommand{\mf}{\mathbf}
\newcommand{\mc}{\mathcal}
\newcommand{\e}{\emph}
\newcommand{\vp}{\varphi}
\newcommand{\Diff}{\textrm{Diff}}
\newcommand{\Norm}{\textrm{Norm}}
\newcommand{\Hom}{\textrm{Hom}}
\newcommand{\ch}{\textrm{char}}
\newcommand{\lcm}{\textrm{lcm}}
\newcommand{\wt}{\widetilde}
\title{Galois covers of the open $p$-adic disc}
%
\author{Scott Corry}
\institute{Lawrence University, Department of Mathematics, 711 E. Boldt Way, Appleton, WI 54911, USA. \email{corrys@lawrence.edu}}
\date{Received: date / Revised version: date}
%
%

\noindent
Scott Corry\\
\\
\noindent
{\large \textbf{Galois covers of the open $p$-adic disc}}
\\

\begin{abstract}
This paper investigates Galois branched covers of the open $p$-adic disc and their reductions to characteristic $p$. Using the field of norms functor of Fontaine and Wintenberger, we show that the special fiber of a Galois cover is determined by arithmetic and geometric properties of the generic fiber and its characteristic zero specializations. As applications, we derive a criterion for good reduction in the abelian case, and give an arithmetic reformulation of the local Oort Conjecture concerning the liftability of cyclic covers of germs of curves.
\keywords{open $p$-adic disc -- field of norms -- Galois groups -- characteristic $p$ -- lifting -- reduction -- ramification -- Oort Conjecture.}
\subclass{11S15, 12F10, 12F15 (primary), 13B05, 14D15, 14H30 (secondary).}
\end{abstract}

\section{Introduction}
\label{sec:introduction}

Let $R$ be a complete discrete valuation ring of mixed characteristic $(0,p)$, with perfect residue field $k$ and fraction field $K$. This paper concerns the reduction of Galois covers of the open $p$-adic disc $D:=\textrm{Spec}(R[[Z]])$. In particular, in section \ref{sec:ArithmeticOort} we use our main result (Theorem \ref{Main}) to give a characterization of the abelian covers which have good reduction to characteristic~$p$ (Proposition \ref{arithcrit}).

In order to describe our main result, let $Y\rightarrow D$ be a regular $G$-Galois branched cover, with $Y$ normal and with reduced special fiber $Y_k$. Then Theorem \ref{Main} provides a characterization of the special fiber $Y_k\rightarrow D_k$ in terms of the generic fiber $Y_K\rightarrow D_K$ and its characteristic zero specializations. The connection between the various fibers is effected by means of Wintenberger's field of norms functor \cite{W}, which we briefly recall in section \ref{sec:FieldofNorms}. Roughly speaking, our result says that the special fiber $Y_k\rightarrow D_k$ ``wants'' to be the field of norms of the characteristic zero fibers, and the failure of this identification is due to the presence of inseparability in the special fiber.

In order to relate the field of norms to the open $p$-adic disc, we choose a Lubin-Tate extension $L|K$, and consider the associated field of norms $X_K(L)$, a local field of characteristic $p$ with residue field $k$. The choice of a uniformizer for $X_K(L)$ yields an isomorphism $k((z))\xrightarrow[]{\sim} X_K(L)$, allowing us to identify $D_k=\textrm{Spec}(k[[z]])$ with the spectrum of the ring of integers $R_{X_K(L)}\subset X_K(L)$. Hence, we may view the special fiber $Y_k\rightarrow D_k$ as corresponding to a ring extension of $R_{X_K(L)}$.

On the other hand, the chosen uniformizer for $X_K(L)$ is a coherent system of norms, the components of which define a net of points $\{x^E\}\subset D_K$, and we may consider the collection of characteristic zero fibers $Y_E\rightarrow x^E$. Theorem \ref{Main} says that in the abelian case, the irreducibility of the fibers $Y_E$ implies the irreducibility of the special fiber $Y_k$. Moreover, for arbitrary groups $G$, if $Y_k$ is irreducible, then the separability of the special fiber is determined by the limiting behavior of the differents $d_E$ of the fibers $Y_E\rightarrow x^E$. Finally, when the special fiber $Y_k\rightarrow D_k$ is separable (but perhaps reducible), its generic fiber $Y_{k,\eta}\rightarrow D_{k,\eta}$ is obtained by applying the field of norms functor to the ``limit'' of the fibers $Y_E\rightarrow x^E$. 

The original motivation for this study was provided by the global lifting problem for Galois covers of curves. The general question is as follows: if $k$ is a algebraically closed field of characteristic $p>0$, and $f:C\rightarrow C'$ is a finite $G$-Galois branched cover of smooth projective $k$-curves, does there exists a lifting of $f$ to mixed characteristic? Via a local-global principle (\cite{GM} section III, \cite{BM} Corollaire 3.3.5), the global lifting problem reduces to the local lifting problem: given a finite $G$-Galois extension of power series rings
$k[[t]]|k[[z]]$, does there exist a lifting to a $G$-Galois extension $R[[T]]|R[[Z]]$, where $R$ is a mixed characteristic DVR with residue field $k$?  That is, the lifting problem becomes a question about Galois covers of the open $p$-adic disc. For an overview of the local and global lifting problems, see e.g. \cite{OSS}, \cite{GM}, \cite{Ga}, \cite{Gr}, \cite{WB}, \cite{CGH}.

The guiding conjecture in the subject was provided by F. Oort 
who suggested that cyclic covers should always lift (\cite{O}, I.7).
In section \ref{sec:ArithmeticOort}, we use our result to derive an arithmetic
reformulation of a stronger form of this conjecture, which specifies the ring
$R$ over which the lifting should occur:

\noindent
\begin{StrongOC}\footnote{We had originally intended to use the name Strong (Local) Oort Conjecture for this statement, but that name has recently been used in \cite{CGH} for a different strengthening (and generalization) of the Oort Conjecture.}
If $Y_k\cong \textrm{Spec}(k[[t]])$ and $f:Y_k\rightarrow D_k$ is cyclic of order $n$,
then $f$ lifts to a cover $Y\rightarrow D$ over $R=W(k)[\zeta_n]$, where $W(k)$
denotes the Witt vectors of $k$. (Here $k$ is algebraically closed of characteristic $p>0$.)
\end{StrongOC}

In section \ref{sec:FieldofNorms} of this paper we describe the theory of the field of norms due to Fontaine and
Wintenberger. 
Section \ref{sec:MT} contains the proof of Theorem \ref{Main} characterizing the special fiber 
of a Galois
branched cover of the open $p$-adic disc in terms of the characteristic zero
fibers of the cover. A criterion for good reduction in the abelian case is derived in section \ref{sec:ArithmeticOort}, together with an arithmetic reformulation of the Ring Specific Local Oort Conjecture. 

\subsection{Notation}
\label{sec:notation}

Let $K$ be a mixed characteristic complete discretely valued field with residue characteristic $p>0$. We
make the following notational conventions:
\begin{itemize}
\item[-] $R_K$ denotes the valuation ring of $K$;
\item[-] $\mathfrak{m}_K$ denotes the maximal ideal of $R_K$;
\item[-] $k_K$ denotes the residue field of $R_K$;
\item[-] $\nu_K$ denotes the normalized discrete valuation on $K$, so that 
$\nu_K(K^\times)=\mathbb{Z}$;
\item[-] $|\cdot|_K$ is the absolute value on $K$ induced by $\nu_K$, normalized
so that $|\alpha|_K=~p^{-\nu_K(\alpha)}$;
\item[-] if $L$ is the completion of an algebraic extension of $K$, then we
also denote by $\nu_K$ (resp. $|\cdot|_K$) the unique prolongation of $\nu_K$ 
(resp. $|\cdot|_K$) to $L$.
\end{itemize}

\section{The Field of Norms}
\label{sec:FieldofNorms}

\subsection{Arithmetically profinite extensions}
\label{sec:APF}

The field of norms construction applies to a certain type of field extension,
which we now describe. The basic reference for this material is \cite{W}.

\begin{defin}
Let $K$ be a complete discrete valuation field with perfect residue field $k_K$
of characteristic $p>0$, 
and $K^{sep}$ a fixed separable closure. Then an 
extension $L|K$ contained in $K^{sep}|K$ is called
\emph{arithmetically profinite (APF)} if for all $u\ge -1$, the group 
$G_K^uG_L$
is open in $G_K$.
\end{defin}
If we set $K^u:=\textrm{Fix}(G_K^u)\subset K^{sep}$, then this
definition means simply that $L^u := K^u\cap L$ is a finite extension of $K$
for all $u\ge -1$. 
Since the upper ramification filtration is separated, it follows that
$K^{sep}=\cup_u K^u$, which implies that ${L=\cup_u L^u}$. The fact that the ramification subextensions $L^u|K$ are finite and exhaust the extension $L|K$ is exactly the condition that allows one to define an inverse Herbrand function $\psi_{L|K}$ (see \cite{W} 1.2.1). 

\begin{examp}
The extension $\mathbb{Q}_p(\zeta_{p^\infty})|\mathbb{Q}_p$ is arithmetically profinite, and in this case
$L^m=\mathbb{Q}_p(\zeta_{p^m})$ for all  $m\ge 0$.
\end{examp}

An important quantity attached to an APF extension
$L|K$ is 
$$
i(L|K):=\textrm{sup}\{u\ge -1 \ | \ G_K^uG_L=G_K\}.
$$
In terms of the ramification subextensions $L^u|K$, the quantity $i(L|K)$ is the
supremum of the indices $u$ such that $L^u=K$. In the case where $L|K$ is Galois
with group $G=G_K/G_L$, we have $G^u=G_K^uG_L/G_L$, and $i(L|K)$ is the first
jump in the upper ramification filtration on $G$. 
Note that
$i(L|K)\ge 0$ if and only if $L|K$ is totally ramified, and $i(L|K)>0$ if and
only if $L|K$ is totally wildly ramified. 

Given an infinite APF extension $L|K$, let $\mathcal{E}_{L|K}$  denote the
set of finite subextensions of $L|K$, partially ordered by
inclusion. The key technical fact about the extension $L|K$ is the following
property of the quantity $i(-)$:

\begin{prop}\emph{(\cite{W}, Lemme 2.2.3.1)}\label{APFinfty} The numbers $i(L|E)$ for
$E\in\mathcal{E}_{L|K}$ tend to $\infty$ with respect to the directed set
$\mathcal{E}_{L|K}$.
\end{prop}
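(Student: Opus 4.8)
The plan is to obtain an explicit formula for $i(L|E)$ in terms of the Herbrand function of $E|K$ and the tower of ramification subfields $L^v:=L\cap K^v$, and then to read off the conclusion from the observation that each $L^v$ is already a finite subextension of $L|K$, while the $L^v$ exhaust $L$ as $v\to\infty$.

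First I would assemble two inputs. (a) \emph{Functoriality of the upper numbering under a finite base change} (\cite{S}, Chapter~IV, combined with the composition rule $\varphi_{M|K}=\varphi_{E|K}\circ\varphi_{M|E}$ and passage to the inverse limit; see also \cite{W}, 1.2): for $E\in\mathcal{E}_{L|K}$ one has $G_E^u=G_E\cap G_K^{\varphi_{E|K}(u)}$, equivalently $E^u:=\textrm{Fix}(G_E^u)=E\cdot K^{\varphi_{E|K}(u)}$, where $\varphi_{E|K}$ is the ordinary Herbrand function of $E|K$ — an increasing, continuous, piecewise-linear bijection of $[-1,\infty)$ whose inverse $\psi_{E|K}$ satisfies $\psi_{E|K}(v)\ge v$. (b) The APF property as recalled above: $L^v:=L\cap K^v$ is a \emph{finite} extension of $K$ for every $v\ge -1$, hence $L^v\in\mathcal{E}_{L|K}$; and $v\mapsto L^v$ is nondecreasing (larger $v$ means smaller $G_K^v$, hence larger fixed field).

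Then I would unwind $i(L|E)$ for a fixed $E\in\mathcal{E}_{L|K}$. Because $G_E^u$ is a closed normal subgroup of $G_E$ and $G_L\subset G_E$, the group $G_E^uG_L$ equals $G_E$ if and only if its fixed field $E^u\cap L$ equals $E$. Substituting $E^u=E\cdot K^{\varphi_{E|K}(u)}$ and applying the modular law $G_L\cdot(G_E\cap G_K^v)=(G_LG_K^v)\cap G_E$ (legitimate since $G_L\subset G_E$) gives $E^u\cap L=E\cdot L^{\varphi_{E|K}(u)}$. So $G_E^uG_L=G_E$ precisely when $L^{\varphi_{E|K}(u)}\subset E$, and taking the supremum over such $u$ — using that $\varphi_{E|K}$ is an increasing bijection and $v\mapsto L^v$ is monotone — yields
$$i(L|E)=\psi_{E|K}\bigl(\,\sup\{v\ge -1\ :\ L^v\subset E\}\,\bigr).$$

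To conclude: fix $v_0\ge -1$ and set $E_0:=L^{v_0}\in\mathcal{E}_{L|K}$. For any $E\in\mathcal{E}_{L|K}$ with $E\supset E_0$ we have $L^v\subset L^{v_0}=E_0\subset E$ for all $v\le v_0$, so $\sup\{v\ge -1:L^v\subset E\}\ge v_0$ and hence $i(L|E)\ge\psi_{E|K}(v_0)\ge v_0$. Since $v_0$ was arbitrary this is exactly the assertion that $i(L|E)\to\infty$ along the directed set $\mathcal{E}_{L|K}$. The step I expect to demand the most care is input (a): getting the Herbrand function $\varphi_{E|K}$ itself (not its inverse) into the functoriality statement, and checking that this identity — and the ensuing manipulations with fixed fields and the modular law — remain valid at the profinite level $G_K=\textrm{Gal}(K^{sep}|K)$, i.e. that all the subgroups in play are closed so that the Galois correspondence may be invoked. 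Everything after that is bookkeeping with the fields $L^v$ and their monotonicity.
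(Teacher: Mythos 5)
Your argument is correct. Note first that the paper itself offers no proof of this proposition --- it is quoted verbatim from Wintenberger (\cite{W}, Lemme 2.2.3.1) --- so there is no in-paper argument to compare against; what you have written is a self-contained reconstruction, and it is essentially the standard one. The two pillars check out: the compatibility $G_E^u=G_E\cap G_K^{\varphi_{E|K}(u)}$ of the upper numbering with finite base change is legitimate at the profinite level (all subgroups involved are closed, and products such as $G_LG_K^v$ and $G_LG_E^u$ are closed because one factor is normal and both are compact, so the Galois correspondence applies), and the Dedekind modular law with $G_L\subset G_E$ gives $E^u\cap L=E\cdot L^{\varphi_{E|K}(u)}$ exactly as you say. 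The resulting formula $i(L|E)=\psi_{E|K}\bigl(\sup\{v:L^v\subset E\}\bigr)$, together with $\psi_{E|K}\ge\mathrm{id}$, the finiteness of each $L^{v_0}$ (this is precisely where APF enters), and the monotonicity of $v\mapsto L^v$, yields the divergence along the directed set. One cosmetic remark: you do not actually need the full strength of the identity for $i(L|E)$; the inequality $i(L|E)\ge v_0$ for $E\supseteq L^{v_0}$ (the ``only if'' direction of your equivalence $G_E^uG_L=G_E\iff L^{\varphi_{E|K}(u)}\subset E$) already suffices, which is closer to how Wintenberger phrases it, using the monotonicity statement recorded in the paper as Proposition~\ref{APFprops}(iv) to propagate the bound to larger $E$. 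Either way, the proof is complete.
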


\subsubsection{Lubin-Tate extensions}\label{Coleman}

A special class of infinite APF extensions are 
the Lubin-Tate extensions, which we now briefly recall (see \cite{LT} and \cite{N} Chapter V). Let $H$ be a finite extension of 
$\mathbb{Q}_p$, and $\Gamma$ a Lubin-Tate formal group associated to a
uniformizer $\varpi$ of $H$. Then $\Gamma$ is a formal $R_H$-module, 
and interpreting the group $\Gamma$ in
$\mathfrak{m}^{sep}$ makes this ideal into an $R_H$-module (here
$\mathfrak{m}^{sep}$ is
the maximal ideal of the valuation ring of $H^{sep}$). Let
$\Gamma_m\subset\mathfrak{m}^{sep}$ be the $\varpi^m$-torsion of this
$R_H$-module. Then $R_H/\varpi^m R_H\cong \Gamma_m$ for all $m$, and in particular
$\Gamma_m$ is finite. Now define $L_0:=\cup_mH(\Gamma_m)$, which is an infinite 
totally
ramified abelian extension of $H$, the \emph{Lubin-Tate
extension of $H$ associated to $\varpi$}. Let $K$ be a complete unramified
extension of $H$ with Frobenius element $\phi\in \textrm{Gal}(K|H)$,
and define $L:=L_0K$. Then
$L|K$ is an infinite abelian APF
extension, with ramification subfields
$L^m:=\textrm{Fix}(G(L|K)^m)=K(\Gamma_m)$ (\cite{N}, Corollary V.5.6).
Moreover, we have
$[L^m:K]=q^{m-1}(q-1)$, where $q:=\#(k_H)$.
We will refer to extensions of this
type as Lubin-Tate extensions, despite the fact that they are really the
compositum of a Lubin-Tate extension with an unramified extension.

\subsection{The field of norms}
\label{sec:FON}

Having introduced infinite APF extensions, we are now
ready to describe the field of norms construction, following \cite{W}: given an infinite APF
extension $L|K$, set
$$
X_K(L)^* = \lim_{\xleftarrow[\mathcal{E}_{L|K}]{}}E^*,
$$
the transition maps being
given by the norm $N_{E'|E}:E'^*\rightarrow E^*$ for $E\subset E'$. Then define
$X_K(L)=X_K(L)^*\cup\{0\}.$
Thus, a nonzero element $\alpha$ of $X_K(L)$ is given by a norm-compatible sequence
$\alpha=(\alpha_E)_{E\in\mathcal{E}_{L|K}}$.
We wish to endow this set with an additive structure in such a way that $X_K(L)$
becomes a field, called the \emph{field of norms} of $L|K$. This is
accomplished by the following
\begin{prop}\emph{(\cite{W}, Th\'eor\`eme 2.1.3 (i))} If $\alpha,\beta\in
X_K(L)$, then for all ${E\in\mathcal{E}_{L|K}}$, the elements
$\{N_{E'|E}(\alpha_{E'}+\beta_{E'})\}_{E'}$ converge (with respect to the directed set
$\mathcal{E}_{L|E}$) to an element $\gamma_E\in E$. Moreover, $\alpha+\beta :=
(\gamma_E)_{E\in\mathcal{E}_{L|K}}$ is an element of $X_K(L)$.
\end{prop}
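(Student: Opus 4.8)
The plan is to show that, for each fixed $E\in\mathcal{E}_{L|K}$, the net $\big(N_{E'|E}(\alpha_{E'}+\beta_{E'})\big)_{E'\in\mathcal{E}_{L|E}}$ is Cauchy in the complete discretely valued field $E$; its limit is then the required $\gamma_E\in E$, and the norm-compatibility of $(\gamma_E)_E$ will follow formally. So let $E\subseteq E'\subseteq E''$ be finite subextensions of $L|K$. Since $\alpha$ and $\beta$ are norm-compatible we have $N_{E''|E'}(\alpha_{E''})=\alpha_{E'}$ and $N_{E''|E'}(\beta_{E''})=\beta_{E'}$, so, factoring the norm over $E''|E$ as $N_{E'|E}\circ N_{E''|E'}$ and setting $a=\alpha_{E'}$, $b=\beta_{E'}$, $c=N_{E''|E'}(\alpha_{E''}+\beta_{E''})$, we get
$$
N_{E''|E}(\alpha_{E''}+\beta_{E''})-N_{E'|E}(\alpha_{E'}+\beta_{E'})\;=\;N_{E'|E}(c)-N_{E'|E}(a+b),
$$
where $c-(a+b)=N_{E''|E'}(\alpha_{E''}+\beta_{E''})-N_{E''|E'}(\alpha_{E''})-N_{E''|E'}(\beta_{E''})$ is precisely the defect of additivity of $N_{E''|E'}$ evaluated at $(\alpha_{E''},\beta_{E''})$. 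Thus everything reduces to estimating this defect and then controlling how it propagates under the further pushdown by $N_{E'|E}$.

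The technical heart is a lemma to the effect that norm maps in finite subextensions of an APF extension are \emph{almost additive}: for a finite subextension $N|M$ of $L|K$ and $x,y\in R_N$, the valuation $\nu_M\big(N_{N|M}(x+y)-N_{N|M}(x)-N_{N|M}(y)\big)$ is bounded below by a quantity that grows without bound as $i(N|M)\to\infty$, with an explicit dependence (through the inverse Herbrand function $\psi_{N|M}$) that remains effective after one more application of a norm. I would prove this by writing $N_{N|M}=\prod_\sigma\sigma$ over embeddings into a Galois closure, telescoping $\prod_\sigma\sigma(x+y)-\prod_\sigma\sigma(x)-\prod_\sigma\sigma(y)$, and bounding the resulting cross terms via the relation between the ramification filtration of $N|M$, its different, and the valuations of $\sigma(z)-z$ for $z\in R_N$. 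Granting this, apply it with $M=E'$, $N=E''$: by Proposition \ref{APFprops}(iv), $i(E''|E')\ge i(L|E')$, and by Proposition \ref{APFinfty}, $i(L|E')\to\infty$ as $E'$ runs through $\mathcal{E}_{L|K}$. Hence $\nu_{E'}\big(c-(a+b)\big)\to\infty$ with $E'$, uniformly in $E''\supseteq E'$, and — by the explicit form of the estimate, together with the elementary fact that norms of two elements of $E'$ that are close in $E'$ are close in $E$ — so does $\nu_E\big(N_{E'|E}(c)-N_{E'|E}(a+b)\big)$. Therefore the net $\big(N_{E'|E}(\alpha_{E'}+\beta_{E'})\big)_{E'}$ is Cauchy and, $E$ being complete, converges to some $\gamma_E\in E$.

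It remains to check that $\alpha+\beta:=(\gamma_E)_{E\in\mathcal{E}_{L|K}}$ lies in $X_K(L)$, i.e.\ is norm-compatible. Fix $E_1\subseteq E_2$ in $\mathcal{E}_{L|K}$. By construction $\gamma_{E_2}=\lim_{E'\supseteq E_2}N_{E'|E_2}(\alpha_{E'}+\beta_{E'})$, and $N_{E_2|E_1}$ is continuous for the valuation topologies, so $N_{E_2|E_1}(\gamma_{E_2})=\lim_{E'\supseteq E_2}N_{E_2|E_1}\big(N_{E'|E_2}(\alpha_{E'}+\beta_{E'})\big)=\lim_{E'\supseteq E_2}N_{E'|E_1}(\alpha_{E'}+\beta_{E'})$ by transitivity of norms. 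Since the subextensions $E'$ containing $E_2$ are cofinal among those containing $E_1$, this last limit is $\gamma_{E_1}$, giving $N_{E_2|E_1}(\gamma_{E_2})=\gamma_{E_1}$ as desired.

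The main obstacle is the almost-additivity lemma of the second paragraph. The soft part — that the norm is approximately additive — is a routine telescoping of a product over conjugates; the hard part is making the approximation \emph{quantitatively} strong enough, expressed through $\psi$ and $i(-)$, that it survives the extra norm $N_{E'|E}$ in the tower $E\subseteq E'\subseteq E''$. This is exactly the point where the APF hypothesis and Proposition \ref{APFinfty} are indispensable: they guarantee that the bottom index $i(L|E')$, and hence $i(E''|E')$, grows fast enough along the directed set to force the error terms to zero after pushdown.
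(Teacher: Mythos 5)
The paper does not prove this statement: it is imported verbatim from Wintenberger (\cite{W}, Th\'eor\`eme 2.1.3 (i)), so there is no in-paper argument to compare against. Measured against Wintenberger's actual proof, your outline reproduces its architecture faithfully: reduce to a Cauchy-net statement in the complete field $E$, control the increments by the defect of additivity of $N_{E''|E'}$, drive that defect to zero using $i(E''|E')\ge i(L|E')\to\infty$ (Propositions \ref{APFprops}(iv) and \ref{APFinfty}), and deduce norm-compatibility of the limits from continuity and transitivity of norms. That last paragraph and the reduction of the increment to $N_{E'|E}(c)-N_{E'|E}(a+b)$ are correct and complete as written.

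The gap is that the entire mathematical content sits inside the ``almost-additivity'' lemma, which you only gesture at, and the gesture as written would not go through directly. Expanding $\prod_\sigma(\sigma(x)+\sigma(y))-\prod_\sigma\sigma(x)-\prod_\sigma\sigma(y)$ leaves a sum of cross terms each of which is merely integral; no bound on $\nu_N(\sigma(z)-z)$ applied termwise gives a valuation lower bound that grows with $i(N|M)$. The actual mechanism is to group the cross terms into orbits under the Galois action on subsets of embeddings, recognize each orbit sum as a trace from an intermediate field, and invoke the divisibility of traces controlled by the different (equivalently, by the ramification filtration, hence by $i(N|M)$); this is where the exponent $r(F)=\lceil\frac{p-1}{p}i(L|F)\rceil$ appearing in Proposition \ref{ApproxAPF} comes from. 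Likewise, the ``elementary fact that norms of close elements are close'' is not elementary in the form you need: the naive telescoping bound loses a factor $e(E'|E)$, which grows with $E'$, so you must use the quantitative statement that $N_{E'|E}$ carries the unit filtration $U_{E'}^{\psi_{E'|E}(t)}$ into $U_E^{t}$ -- you name $\psi$ in the lemma statement, but the proof of that compatibility is itself part of the missing work. Finally, your lemma is stated for $x,y\in R_N$ while $\alpha,\beta\in X_K(L)$ need not be integral; you should record the reduction to the integral case by factoring out a power of a uniformizer $\pi=(\pi_E)_E$ of $X_K(L)$ and using multiplicativity of the norm.
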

With this definition of addition, the set $X_K(L)$ becomes a field, with
multiplicative group $X_K(L)^*$. Moreover, there is a natural discrete
valuation on $X_K(L)$. Indeed, if $L^0$ denotes the maximal unramified subextension of 
$L|K$ (which is finite over K by APF), then 
$\nu_{X_K(L)}(\alpha):=\nu_E(\alpha_E)\in\mathbb{Z}$
does not depend on $E\in\mathcal{E}_{L|L^0}$. In fact 
(\cite{W}, Th\'eor\`eme 2.1.3 (ii)),  $X_K(L)$ is a complete
discrete valuation
field with residue field isomorphic to $k_L$ 
(which is a finite extension of $k_K$). The isomorphism of residue fields
$k_{X_K(L)}\cong k_L$ comes about as follows. For $x\in k_L$, let $[x]\in L^0$
denote the Teichm\"uller lifting. Note that $E|L^1$ is of  
$p$-power degree for all
$E\in\mathcal{E}_{L|L^1}$, so $x^\frac{1}{[E:L^1]}\in k_L$ for all such $E$,
since $k_L$ is perfect.
The element $([x^\frac{1}{[E:L^1]}])_{E\in\mathcal{E}_{L|L^1}}$ is clearly a
coherent system of norms, hence (by cofinality) 
defines an element $f_{L|K}(x)\in X_K(L)$. The map $f_{L|K}:k_L\rightarrow
X_K(L)$ is a field embedding which induces the isomorphism $k_L \cong
k_{X_K(L)}$ mentioned above.

The following result will be used several times in the proof of our main result,
Theorem~\ref{Main}. Before stating it, we make a
\begin{defin} For any subfield $E\in \mathcal{E}_{L|K}$, define
$r(E):=\left\lceil\frac{p-1}{p}i(L|E)\right\rceil.$
\end{defin}

\begin{prop}\emph{(\cite{W}, Proposition 2.3.1 \& Remarque 2.3.3.1)}\label{ApproxAPF}
Let $L|K$ be an infinite APF extension and $F\in\mathcal{E}_{L|L^1}$ be any
finite extension of $L^1$ contained in $L$. Then
\begin{enumerate}
\item for any $x\in R_F$, there exists 
$\hat{x}=(\hat{x}_E)_{E\in\mathcal{E}_{L|K}}\in X_K(L)$ such that
$$
\nu_F(\hat{x}_F-x)\ge r(F);
$$
\item for any $\alpha, \beta\in R_{X_K(L)}$, we have
$$
(\alpha+\beta)_F\equiv \alpha_F+\beta_F \mod \mathfrak{m}_F^{r(F)}.
$$
\end{enumerate}
\end{prop}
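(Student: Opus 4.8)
The plan is to reduce both assertions to Wintenberger's construction of the field of norms and its approximation estimates, following \cite{W} closely. The essential point is that elements of $X_K(L)$, being norm-compatible sequences, are controlled at a fixed level $F$ by how the norm maps $N_{E|F}$ interact with the ramification of $L|F$; the quantity $r(F)$ measures the radius within which norms from far-up extensions become ``invisible'' at level $F$. First I would record, from Proposition \ref{APFinfty}, that $i(L|E)\to\infty$ along $\mathcal{E}_{L|K}$, so that $r(E)\to\infty$ as well; this is what makes the sequences defining addition converge and what lets us approximate a single element $x\in R_F$ by a coherent system.

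For part (i), I would fix $x\in R_F$ and build $\hat x$ as follows: for each $E\in\mathcal{E}_{L|F}$ with $i(L|E)$ large, one uses that $E|F$ is totally wildly ramified of $p$-power degree (by Proposition \ref{APFprops} iv) combined with the APF structure, after absorbing $L^1$), and that for such extensions the norm map $N_{E|F}$ is surjective onto $1+\mathfrak{m}_F^{\,c}$ for appropriate $c$ — more precisely, $N_{E|F}(R_E^\times)$ contains $1+\mathfrak{m}_F^{\,m}$ once $m$ exceeds a bound governed by the different, which is in turn governed by $i(L|E)$ via the formula $\psi_{E|F}$. Choosing compatible lifts $\hat x_E$ of $x$ (up to the allowed error) and checking norm-compatibility gives an element $\hat x\in X_K(L)$ with $\nu_F(\hat x_F - x)\ge r(F)$; this is exactly the content of \cite{W} Proposition 2.3.1. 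Part (ii) is then essentially the definition of addition in $X_K(L)$ together with the same estimate: by the proposition defining $\alpha+\beta$, one has $(\alpha+\beta)_F=\lim_{E}N_{E|F}(\alpha_E+\beta_E)$, and since $N_{E|F}$ is multiplicative and approximately additive modulo $\mathfrak{m}_F^{r(F)}$ on integral elements (because $E|F$ has $p$-power degree and large conductor), each term $N_{E|F}(\alpha_E+\beta_E)$ is congruent to $\alpha_F+\beta_F$ modulo $\mathfrak{m}_F^{r(F)}$; passing to the limit preserves the congruence.

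The main obstacle is the quantitative control of the norm map on units of a totally wildly ramified $p$-power extension: one must show $N_{E|F}(1+\mathfrak{m}_E^{\,n})\subset 1+\mathfrak{m}_F^{\,?}$ and conversely, with the bound expressed in terms of $i(L|E)$ rather than $i(E|F)$, and one needs the uniformity that lets $r(F)$ — depending only on $F$ — serve as the error for \emph{all} sufficiently large $E$. This is handled by the standard computation of $N_{E|F}(1-x)\equiv 1 - \mathrm{Tr}_{E|F}(x) \pmod{\text{higher order}}$ together with the behavior of the trace on the ramification filtration (Serre, \cite{S}, Chapter V), and by Proposition \ref{APFprops} iv) which converts the lower bound on $i(L|E)$ into the needed lower bound on the ramification jumps of $E|F$. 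Since all of this is proved in \cite{W}, I would cite it rather than reproduce it: the role of the proposition here is purely as an input to Theorem \ref{Main}, so it suffices to extract the two stated inequalities from \cite{W} Proposition 2.3.1 and Remarque 2.3.3.1 verbatim.
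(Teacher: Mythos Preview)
The paper does not prove this proposition at all: it is stated with the attribution ``(\cite{W}, Proposition 2.3.1 \& Remarque 2.3.3.1)'' and no argument is given, since it is used purely as an imported tool. Your proposal ultimately arrives at the same conclusion --- cite \cite{W} rather than reproduce the proof --- so in that sense you agree with the paper's treatment; the sketch you give of Wintenberger's argument (norm/trace estimates on the unit filtration in totally wildly ramified $p$-power extensions, uniformized via $i(L|E)\to\infty$) is a reasonable outline of what lies behind the cited result, but it goes beyond what the paper itself supplies.
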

This proposition says that an element of $R_F$ can be approximated by an element of the field of norms $X_K(L)$, and that the addition in $X_K(L)$ approximates the addition in $R_F$.  The error in these approximations has valuation at least $r(F)$ in the field $F$.

The construction just described, which produces a complete discrete valuation
field of characteristic
$p=\textrm{char}(k_K)$ from an infinite APF extension $L|K$ is actually functorial
in $L$ (see \cite{W} 3.1). Precisely, $X_K(-)$ can be viewed as a functor from the category of
infinite APF extensions of $K$ contained in $K^{sep}$ (where the morphisms are
$K$-embeddings of finite degree) to the category of complete discretely valued
fields of
characteristic $p$ (where the morphisms are separable embeddings
of finite degree). Moreover, this functor preserves Galois extensions and
Galois groups. 

Fixing an infinite APF extension $L|K$, the functorial nature of $X_K(-)$
allows us to define a field of norms for \emph{any} separable algebraic 
extension $M|L$. Namely, given such an $M$, define the directed set
$\mathcal{M}:=\{L'\subset M \ |\ [L': L]<\infty\}$, and note that
$$
M=\lim_{\xrightarrow[\mathcal{M}]{}}L'.
$$
Then we define the field of norms
$$
X_{L|K}(M) := \lim_{\xrightarrow[\mathcal{M}]{}}X_K(L').
$$
With this
definition, we can consider $X_{L|K}(-)$ as a functor from the category of
separable algebraic extensions of $L$ to the category of separable algebraic
extensions of $X_K(L)$. 
\begin{prop}\emph{(\cite{W}, Th\'eor\`eme 3.2.2)}\label{Cats} The field of norms functor 
$X_{L|K}(-)$ is an equivalence of categories.
\end{prop}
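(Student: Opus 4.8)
The plan is to deduce the equivalence from two structural facts about the field of norms that have already been recorded: for finite $L'|L$ the tower $L'|K$ is again APF (Proposition~\ref{APFprops}(ii)), the functor $X_K(-)$ on APF extensions of $K$ preserves the degree of a finite extension, and it carries Galois extensions to Galois extensions with a canonical identification of Galois groups (\cite{W}). Granting these, Proposition~\ref{Cats} breaks into three steps: reduction to finite extensions, full faithfulness, and essential surjectivity.

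First I would reduce to finite subextensions. Every object $M|L$ of the source category is the filtered union of its finite subextensions $L'$, with $X_{L|K}(M)=\varinjlim_{L'}X_K(L')$ by definition, and $\mathrm{Hom}_L(M,-)=\varprojlim_{L'}\mathrm{Hom}_L(L',-)$; symmetrically, every separable algebraic extension of $X_K(L)$ is the filtered union of its finite subextensions. Hence $X_{L|K}(-)$ is an equivalence as soon as its restriction to the full subcategory of \emph{finite} separable extensions is fully faithful and essentially surjective onto finite separable extensions of $X_K(L)$: the general statement then follows by passing to filtered colimits, using finite-level essential surjectivity to see that the target is exhausted by objects in the image and full faithfulness to transport the colimit diagrams.

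Full faithfulness at the finite level is essentially formal. Given finite separable $M,M'|L$, embed both in a finite Galois $\widetilde{M}|L$ with group $G$, so that $M=\widetilde{M}^{H}$ and $M'=\widetilde{M}^{H'}$. Since $X_K$ preserves Galois groups, $X_K(\widetilde{M})|X_K(L)$ is Galois with group canonically $G$; the subfield $X_K(M)$ has degree $[M:L]=[G:H]$ over $X_K(L)$ and is fixed by the image of $H$, so it equals $X_K(\widetilde{M})^{H}$, and likewise $X_K(M')=X_K(\widetilde{M})^{H'}$. Any $L$-embedding $M\to M'$ lifts to an element of $G$, and applying $X_K$ yields the corresponding element of the \emph{same} $G$ restricted to $X_K(M)$; thus the map on Hom-sets is the identity on the common description $\{gH\in G/H : H'\subseteq gHg^{-1}\}$. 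This uses only degree- and Galois-preservation together with the existence of Galois closures.

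The substantive step, and the one I expect to be the main obstacle, is essential surjectivity at the finite level: given a finite separable $\mathcal{E}|X_K(L)$, produce a finite separable $L'|L$ with $X_K(L')\cong\mathcal{E}$ over $X_K(L)$ (such an $L'$ is automatically APF by Proposition~\ref{APFprops}(ii)). Write $\mathcal{E}=X_K(L)(\theta)$ with $\theta$ a root of a monic separable irreducible $g\in R_{X_K(L)}[T]$ of degree $n$. Each coefficient $c$ of $g$ is a norm-compatible system $(c_F)_{F\in\mathcal{E}_{L|L^1}}$, and by Proposition~\ref{ApproxAPF} the projection $\alpha\mapsto\alpha_F$ from $R_{X_K(L)}$ to $R_F$ is multiplicative and additive modulo $\mathfrak{m}_F^{r(F)}$, with $r(F)=\lceil\frac{p-1}{p}i(L|F)\rceil\to\infty$ along $\mathcal{E}_{L|K}$ by Proposition~\ref{APFinfty}. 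Hence the level-$F$ polynomials $g_F(T)=T^{n}+\sum_i (c_i)_F T^{i}\in R_F[T]$ have $\nu_F(\mathrm{disc}\,g_F)$ stabilizing to $\nu_{X_K(L)}(\mathrm{disc}\,g)<\infty$, so $g_F$ is separable for $F$ large, and a Krasner-type argument keeps it irreducible; thus $E_F:=F[T]/(g_F)$ is a degree-$n$ separable extension of $F$. The heart of the argument is to choose roots $\theta_F\in E_F$ so that $(\theta_F)_F$ is norm-compatible — correcting the bounded discrepancy that arises because $\alpha\mapsto\alpha_F$ is only \emph{approximately} additive, by iterating part (ii) of Proposition~\ref{ApproxAPF} and exploiting $r(F)\to\infty$. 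Once this is done, the $E_F$ fit into a coherent tower inside $L^{sep}$, $L':=\bigcup_F E_F$ is a finite separable extension of $L$, and $(\theta_F)_F$ defines a root of $g$ in $X_K(L')$, giving the desired $X_K(L)$-isomorphism $X_K(L')\xrightarrow{\sim}\mathcal{E}$. Controlling this norm-compatibility uniformly in $F$ is exactly where the APF hypothesis, through Proposition~\ref{APFinfty}, is indispensable; everything else is formal or a citation to \cite{W}.
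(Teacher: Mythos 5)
First, a point of reference: the paper does not prove this proposition at all; it is quoted from Wintenberger (\cite{W}, Th\'eor\`eme 3.2.2). The closest internal analogue is the proof of Theorem~\ref{Main}(i), which the author explicitly says is modeled on Wintenberger's proof of the essential surjectivity half of this very statement, so that is the right benchmark for your sketch. Your reduction to finite subextensions and your full-faithfulness argument are fine: granting that $X_K(-)$ preserves degrees, Galois extensions and Galois groups (facts the paper records and which in \cite{W} precede Th\'eor\`eme 3.2.2), the identification of $\mathrm{Hom}$-sets through a common Galois closure is formal, and you are right that essential surjectivity at the finite level is where all the content lives.

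The gap is in how you propose to finish essential surjectivity. You want to choose roots $\theta_F$ of the specialized polynomials $g_F$ forming an exactly norm-compatible system, ``correcting the bounded discrepancy'' by iterating Proposition~\ref{ApproxAPF}(ii). This cannot work as stated, for two reasons. First, you have not shown that the fields $E_F=F[T]/(g_F)$ assemble into a coherent tower inside $K^{sep}$ (i.e.\ that $E_{F'}=F'E_F$ for $F\subset F'$ large); without that there is no field $L'$ and no norm maps to speak of. Second, and more fundamentally, the projections $R_{X_K(L)}\to R_F$ are only additive modulo $\mathfrak{m}_F^{r(F)}$, so a root of $g_F$ is only an approximation to the $F$-component of a root of $g$; exact norm-compatibility $N_{E_{F'}|E_F}(\theta_{F'})=\theta_F$ of genuine roots is not something finite corrections can arrange, and iterating a congruence modulo $\mathfrak{m}_F^{r(F)}$ never produces an equality. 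The device that actually closes this step (in \cite{W}, and in the paper's proof of Theorem~\ref{Main}(i)) goes the other way around: fix one large $F$, set $\tilde F:=F(y^F)$ for $y^F$ a root of $g_F$, observe that $L\tilde F|K$ is APF, use Proposition~\ref{ApproxAPF}(i) to lift $y^F$ to an honest element $\hat y^F\in X_K(L\tilde F)$, prove that $\nu_{X_K(L)}\bigl(g(\hat y^F)\bigr)\to\infty$ (the analogue of Lemma~\ref{limit}), extract a convergent subnet whose limit is a true root $\tilde\chi$ of $g$, and then use Krasner together with an estimate of the type $(\dag\dag)$/Lemma~\ref{GalNorm} to conclude $X_K(L)(\tilde\chi)=X_K(L\tilde F)$ for $F$ large (irreducibility of $g_F$ then falls out by degree count rather than being needed up front). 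You identified the right obstacle, but the proposed resolution is not viable; you need the limit-of-approximate-roots argument rather than an exact coherent system of roots.
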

\noindent
In particular, $X_{L|K}(K^{sep})$
is a separable closure of $X_K(L)$, and we have an isomorphism 
$G_{X_K(L)}\cong G_L$.

Since $X_K(L)$ is a complete discrete valuation field with residue field $k_L$,
it follows that any choice of uniformizer $\pi=(\pi_E)_E$ for $X_K(L)$ yields an
isomorphism $k_L((z))\cong X_K(L)$, defined by sending $z$ to $\pi$. Via this
isomorphism, an element $\alpha=(\alpha_E)_E\in R_{X_K(L)}$ 
corresponds to a power series $g_\alpha(z)\in k_L[[z]]$. The following lemma
describes the relationship between $g_\alpha(z)$ and the coherent system of norms
$\alpha=(\alpha_E)_E$ in terms of the chosen uniformizer $\pi=(\pi_E)_E$. First
we need to introduce some notation. Given a power series
$$
g(z)=\sum_{i=0}^\infty a_iz^i\in k_L[[z]],
$$
define for each $E\in\mathcal{E}_{L|L^1}$ a new power series
$$
g_E(z):=\sum_{i=0}^\infty [a_i^{\frac{1}{[E:L^1]}}]z^i=
\sum_{i=0}^\infty (f_{L|K}(a_i))_E z^i\in R_E[[z]].
$$

\begin{lem}\label{SpecialCong}
For all $\alpha=(\alpha_E)_E\in X_K(L)$, we have
$\alpha_E\equiv g_{\alpha,E}(\pi_E) \mod \mathfrak{m}_E^{r(E)}$
for all $E\in\mathcal{E}_{L|L^1}$.
\end{lem}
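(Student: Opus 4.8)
The plan is to use that, via the isomorphism $k_L((z))\cong X_K(L)$ — which sends $z$ to $\pi$ and carries the constant field $k_L\subset k_L((z))$ onto the coefficient field $f_{L|K}(k_L)\subset X_K(L)$ — the element $\alpha$ is the sum of the convergent series $\sum_{i\ge 0}f_{L|K}(a_i)\pi^i$ in $X_K(L)$, where $g_\alpha(z)=\sum_{i\ge 0}a_iz^i$. (The quantity $g_{\alpha,E}$ is defined only when $g_\alpha$ is a power series, so I take $\alpha\in R_{X_K(L)}$; and if $r(E)\le 0$ the congruence is vacuous, both sides lying in $R_E$, so I assume $r(E)\ge 1$.) Since the multiplicative group of $X_K(L)$ is by construction the inverse limit of the $E^*$ along the norm maps, multiplication in $X_K(L)$ is computed componentwise, and the explicit description of $f_{L|K}$ gives
$$
\big(f_{L|K}(a_i)\,\pi^i\big)_E=\big(f_{L|K}(a_i)\big)_E\,\pi_E^i=\big[a_i^{\frac{1}{[E:L^1]}}\big]\,\pi_E^i,
$$
which is precisely the $i$-th term of the convergent series $g_{\alpha,E}(\pi_E)\in R_E$. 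Thus the lemma asserts that passing to the $E$-component commutes with the series $\sum_i f_{L|K}(a_i)\pi^i$ up to the error $\mathfrak{m}_E^{r(E)}$; the one obstruction is that addition in $X_K(L)$ is \emph{not} componentwise.

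That obstruction is exactly what Proposition~\ref{ApproxAPF}(ii) handles: addition of elements of $R_{X_K(L)}$ is componentwise modulo $\mathfrak{m}_E^{r(E)}$. First I would fix $E\in\mathcal{E}_{L|L^1}$ and set $\alpha^{(n)}:=\sum_{i=0}^n f_{L|K}(a_i)\pi^i\in R_{X_K(L)}$, the $n$-th partial sum. Applying Proposition~\ref{ApproxAPF}(ii) once for each of the $n$ additions, and using that $\mathfrak{m}_E^{r(E)}$ is an ideal so the accumulated errors remain inside it, induction on $n$ yields
$$
\big(\alpha^{(n)}\big)_E\equiv\sum_{i=0}^n\big[a_i^{\frac{1}{[E:L^1]}}\big]\,\pi_E^i\pmod{\mathfrak{m}_E^{r(E)}},
$$
i.e. $(\alpha^{(n)})_E$ agrees modulo $\mathfrak{m}_E^{r(E)}$ with the degree-$n$ truncation of $g_{\alpha,E}(\pi_E)$.

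Then I would pass to the limit. Under the isomorphism $\alpha-\alpha^{(n)}$ corresponds to $\sum_{i>n}a_iz^i$, hence has valuation $\ge n+1$; and since $\nu_{X_K(L)}(\gamma)=\nu_E(\gamma_E)$ for $E\in\mathcal{E}_{L|L^0}\supseteq\mathcal{E}_{L|L^1}$, this gives $\nu_E\big((\alpha-\alpha^{(n)})_E\big)\ge n+1$. Writing $\alpha=\alpha^{(n)}+(\alpha-\alpha^{(n)})$ with both summands in $R_{X_K(L)}$ and invoking Proposition~\ref{ApproxAPF}(ii) once more, I get $\alpha_E\equiv(\alpha^{(n)})_E+(\alpha-\alpha^{(n)})_E\pmod{\mathfrak{m}_E^{r(E)}}$, hence $\alpha_E\equiv(\alpha^{(n)})_E\pmod{\mathfrak{m}_E^{r(E)}}$ as soon as $n+1\ge r(E)$. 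For such $n$, the degree-$n$ truncation of $g_{\alpha,E}(\pi_E)$ also differs from $g_{\alpha,E}(\pi_E)$ by an element of $\mathfrak{m}_E^{n+1}\subseteq\mathfrak{m}_E^{r(E)}$ (recall $\nu_E(\pi_E)=1$). Chaining these three congruences gives $\alpha_E\equiv g_{\alpha,E}(\pi_E)\pmod{\mathfrak{m}_E^{r(E)}}$, which is the claim.

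The step carrying the real content is the appeal to Proposition~\ref{ApproxAPF}(ii): it is Wintenberger's approximation of the field of norms by a single layer $E$ that forces the comparison to hold only modulo $\mathfrak{m}_E^{r(E)}$, and the appearance of exactly this modulus in the statement of the lemma reflects that. Everything else — commuting the $E$-component map past a finite sum, bounding the tail of the series, and checking that the errors never escape the ideal $\mathfrak{m}_E^{r(E)}$ — is bookkeeping, so I expect no essential difficulty beyond keeping track of the modulus carefully.
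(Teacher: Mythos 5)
Your proposal is correct and follows essentially the same route as the paper: write $\alpha$ as the limit of the partial sums $\sum_{i=0}^n f_{L|K}(a_i)\pi^i$, use the componentwise nature of multiplication together with Proposition~\ref{ApproxAPF}(ii) to compare the $E$-component of each partial sum with the truncation of $g_{\alpha,E}(\pi_E)$ modulo $\mathfrak{m}_E^{r(E)}$, and then pass to the limit. Your treatment of the limit step (the explicit tail estimate $\nu_E((\alpha-\alpha^{(n)})_E)\ge n+1$ and the final chaining of congruences) is in fact more careful than the paper's, which asserts the interchange of limit and $E$-component without comment.
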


\begin{proof}
By definition of the isomorphism $k_L((z))\xrightarrow{\sim} X_K(L)$, if
$g_\alpha(z)=\sum_{i=0}^\infty a_iz^i$, then
$$
\alpha=\sum_{i=0}^\infty f_{L|K}(a_i)\pi^i=
\lim_{n\rightarrow\infty}\sum_{i=0}^n f_{L|K}(a_i)\pi^i.
$$
Now by Proposition~\ref{ApproxAPF}, for any $E\in\mathcal{E}_{L|L^1}$ we have 
$$
\left(\sum_{i=0}^n f_{L|K}(a_i)\pi^i\right)_E\equiv
\sum_{i=0}^n (f_{L|K}(a_i))_E\pi_E^i \mod \mathfrak{m}_E^{r(E)}.
$$
Thus we see that
$$
\alpha_E\equiv\lim_{n\rightarrow\infty}\sum_{i=0}^n (f_{L|K}(a_i))_E\pi_E^i
=g_{\alpha,E}(\pi_E) \mod \mathfrak{m}_E^{r(E)}.~\qed
$$
\end{proof}

\subsection{Connection with the open $p$-adic disc}\label{Disc}

Given a totally ramified infinite APF extension $L|K$, we have seen how any
choice of a uniformizer $\pi=(\pi_E)_E\in X_K(L)$ determines an isomorphism
$k((z))\cong X_K(L)$ defined by sending $z$ to $\pi$ (here we set $k:=k_K=k_L$). We would now like to
explicitly describe a connection between the field of norms $X_K(L)$ and the
open $p$-adic disc $D_K:=\textrm{Spec}(R[[Z]]\otimes K)$ that will underly the
rest of our investigation (here $R:=R_K$). Namely, the special fiber of the smooth integral
model $D:=\textrm{Spec}(R[[Z]])$ is $D_k=\textrm{Spec}(k[[z]])$, with generic
point $D_{k,\eta}=\textrm{Spec}(k((z)))$. Via the isomorphism above coming from
the choice of uniformizer $\pi$, we can thus identify $D_{k,\eta}$ with
$\textrm{Spec}(X_K(L))$. On the other hand, each component $\pi_E$ of $\pi$ is a
uniformizer in $E$, and in particular has absolute value $|\pi_E|_K<1$.
Hence, each $\pi_E$ corresponds to a point $x^E\in D_K$ with residue field $E$.
In terms of the Dedekind domain $R[[Z]]\otimes K$, the point $x^E$ corresponds
to the maximal ideal $\mathcal{P}_E$ generated by the minimal polynomial of
$\pi_E$ over $R$. Thus, the uniformizer $\pi$ defines a net of points
$\{x^E\}_E\subset D_K$ which approaches the boundary. In summary, we have the following picture:
\newline

$\begin{CD}
D_{k,\eta} \qquad\qquad@= \qquad \textrm{Spec}(X_K(L)) \qquad\qquad\qquad\qquad D_K\\
k((z)) \qquad\qquad @>>> \qquad\quad\qquad X_K(L) \qquad\qquad\qquad\quad R[[Z]]\otimes K\\
 \ z  \qquad\qquad @>>> \ \qquad\qquad\pi=(\pi_E)_E \qquad \leftrightsquigarrow \qquad \quad \{x^E\}_E
\end{CD}$

\section{The Main Theorem}
\label{sec:MT}

Let $L|K$ be a Lubin-Tate extension as described in section~\ref{Coleman}, with residue field
$k:=k_K=k_L$. Hence, there exists a $p$-adic local field $H$ such that $K|H$ is
unramified and $L=KL_0$, where $L_0|H$ is an honest Lubin-Tate extension,
associated to a formal group $\Gamma$. As usual, we let 
$L^m:=\textrm{Fix}(G(L|K)^m)$, and we recall that
$[L^m:L^1]=\#(k_H)^{m-1}=q^{m-1}.$ Choose a uniformizer $\pi=(\pi_E)_E\in X_K(L)$, which yields the identification
$D_{k,\eta}=\textrm{Spec}(X_K(L))$ as well as the net of points 
$\{x^E\}_E\subset D_K$ as
described in the last section. 

Let $G$ be a finite group, and consider a $G$-Galois regular branched cover $Y\rightarrow D$, with $Y$ normal. We consider this cover to be a family over
$\textrm{Spec}(R_K)$, and we introduce the following notations:
\begin{itemize}
\item[-] $Y_k\rightarrow D_k$ denotes the special fiber of the cover, obtained by taking the
fibered product with $\textrm{Spec}(k)$;
\item[-] $Y_K\rightarrow D_K$ denotes the generic fiber, obtained by
taking the fibered product with $\textrm{Spec}(K)$;
\item[-] for each $E\in\mathcal{E}_{L|K}$, we denote by $Y_E$ the fiber
of $Y_K$ at $x^E\in D_K$;
\item[-] if $X$ is an affine scheme,
then $F(X)$ denotes the total ring of fractions of $X$, obtained from the ring
of global sections, $\Gamma(X)$, by inverting all non-zero-divisors;
\item[-] if the special fiber $Y_k$ is reduced, then $F(Y_k)\cong \prod_{j=1}^{n_s}\mathcal{K}$ is a product of 
$n_s$ copies of a field $\mathcal{K}$,
which is a finite normal extension of $k((z))=X_K(L)$;
\item[-]since only finitely many of the points $x^E$ are ramified in the cover
${Y_K\rightarrow D_K}$, for $E$ large the fiber $Y_E$ is reduced and we have
an isomorphism ${F(Y_E)\cong\prod_{j=1}^{n_E} E'}$,
where $E'|E$ is a finite Galois extension of fields;
\item[-] $d_E:=\nu_{E'}(\mathcal{D}(E'|E))$ denotes the degree of
the different of $E'|E$;
\item[-] $L_E:=LE'$ denotes the compositum of the fields $L$ and $E'$ in $K^{sep}$.
\end{itemize} 

\begin{thm}\label{Main} Let $Y\rightarrow D$ be a $G$-Galois regular branched cover of the open $p$-adic
disc, with $Y$ normal and $Y_k$ reduced. Then
\begin{enumerate}

\item If $Y_k\rightarrow D_k$ is generically separable, then there exists a cofinal set $\mathcal{C}_Y\subset\mathcal{E}_{L|K}$ 
such that for $E\in\mathcal{C}_Y$ large, we have $n_E=n_s$ and
$\mathcal{K}=X_{L|K}(L_E)$
as subfields of $X_K(L)^\textrm{sep}=X_{L|K}(K^\textrm{sep})$.
Moreover, for these $E$, the functor $X_{L|K}(-)$ induces an isomorphism
$\textrm{Gal}(\mathcal{K}|X_K(L))\cong \textrm{Gal}(E'|E)$
which respects the ramification filtrations. In particular, if $d_s$ is the
degree of the different of $\mathcal{K}|X_K(L)$, then $d_s=d_E$.

\item If $Y_k$ is irreducible, then $Y_k\rightarrow D_k$ is generically
inseparable if and only if $d_E\rightarrow\infty$.

\item If $G$ is abelian, then $n_s\le n_E$ for $E$ large, independently of any separability assumption. In particular, $Y_k$ is irreducible if $Y_E$ is irreducible for $E$ large.
\end{enumerate}

\end{thm}

\begin{rem}
If $k$ is a finite field, say $\#(k)=q^t$, then we can take the cofinal set $\mathcal{C}_Y$ in part 1 of
the Theorem to be $\{L^m \ | \ m\equiv 1 \mod t\}$. That is, in the case of a finite residue field, 
$\mathcal{C}_Y$ is
independent of the particular cover $Y\rightarrow D$.
\end{rem}

Before beginning the proof, we first describe two simple arguments that will be used repeatedly.

\subsection{The Weierstrass Argument}\label{WArg}

As a consequence of the Weierstrass Preparation Theorem (\cite{Bour} VII.3.8, Prop. 6), an
arbitrary nonzero element $A(Z)\in R[[Z]]_{\mathfrak{m}}$ has
the form
$$
A(Z)=\frac{\varpi^cf_1(Z)U(Z)}{f_2(Z)},
$$
where the $f_i(Z)$ are distinguished polynomials, $U(Z)$ is a unit in $R[[Z]]$, $\varpi$ is a uniformizer for $R$, and $c\ge 0$. In particular, the denominator $f_2(Z)$
will be relatively prime to almost all height one primes of $R[[Z]]$, so if
$\mathcal{P}=(h(Z))$ is one of these primes, we will have $A(Z)\in R[[Z]]_\mathcal{P}$,
and it will make sense to look at the image of $A(Z)$ in
$R[[Z]]_\mathcal{P}/\mathcal{P}\cong K(\alpha)$, where $\alpha$ is a root of
$h(Z)$ in $K^{sep}$. When we have chosen a particular root $\alpha$, we
will refer to the image of $A(Z)$ in $K(\alpha)$ as the \emph{specialization} of $A$ at the
point $Z=\alpha$, and denote it by $A(\alpha)$. More generally, if 
\begin{equation}\label{Spoly}
S(T)=T^N+A_{N-1}(Z)T^{N-1}+\cdots +A_0(Z)
\end{equation}
is a polynomial with coefficients in $R[[Z]]_\mathfrak{m}$, then we can apply the previous reasoning to each of the finitely many coefficients $A_i(Z)$. We conclude that for almost all points $Z=\alpha$, we can specialize to obtain the polynomial
$$
S(T)|_{Z=\alpha}=T^n+A_{N-1}(\alpha)T^{N-1}+\cdots+A_0(\alpha)\in K(\alpha)[T].
$$
In what follows, we
will refer to this argument (which allows us to specialize polynomials almost everywhere)
as the \emph{Weierstrass Argument}.

\subsection{The Ramification Argument}\label{RArg}

Suppose that $\{x_m\}_m\subset D_K$ is a sequence of points corresponding to a
sequence $\{\alpha_m\}_m\in K^{sep}$ with each $\alpha_m$ being a
uniformizer for the discrete valuation field $K(\alpha_m)$.
Moreover, suppose that $|\alpha_m|_K\rightarrow 1$ as $m\rightarrow \infty$, so
that the points $x_m$ are approaching the boundary of $D_K$. Equivalently, we
are assuming that the ramification index $e_m:=e(K(\alpha_m)|K)$ goes to $\infty$
with $m$. Given 
$A(Z)=\varpi^c\frac{f_1(Z)}{f_2(Z)}U(Z)\in R[[Z]]_\mathfrak{m}$, we can
consider the specialization of $A$ at $Z=\alpha_m$ for $m>>0$ (by the Weierstrass
Argument). We find that
$$
\nu_{K(\alpha_m)}(A(\alpha_m))=c\nu_{K(\alpha_m)}(\varpi)+
\nu_{K(\alpha_m)}\left(\frac{f_1(\alpha_m)}{f_2(\alpha_m)}\right).
$$
Letting $d_i=\deg(f_i)$, observe that for any $a\in\mathfrak{m}_K$ we have 
$\nu_{K(\alpha_m)}(a)\ge\nu_{K(\alpha_m)}(\varpi)=e_m\ge \max\{d_1,d_2\}$ for $m>>0$. It follows
that ${\nu_{K(\alpha_m)}(f_i(\alpha_m))=\nu_{K(\alpha_m)}(\alpha_m^{d_i})=d_i}$, so
${\nu_{K(\alpha_m)}(A(\alpha_m))=ce_m+(d_1-d_2)\ge~d_1-d_2.}$
Thus, we see that the normalized valuations of the specializations $A(\alpha_m)\in
K(\alpha_m)$ are bounded below, independently of $m$. Moreover, if $c>0$ (i.e. if $\overline{A}(z)=0$), then 
$\nu_{K(\alpha_m)}(A(\alpha_m))\rightarrow\infty$ as $m\rightarrow\infty$, and
if $d_1\ge d_2$, then $A(\alpha_m)\in R_{K(\alpha_m)}$ for $m>>0$, even if $c=0$.

Applying the preceding remarks to the finitely many coefficients of a polynomial $S(T)\in R[[Z]]_{\mathfrak{m}}$ as in (\ref{Spoly}), we obtain a uniform lower bound on the normalized valuations of the coefficients of $S(T)|_{Z=\alpha_m}$, 
independently of $m$. As described above, it is easy to check whether
these specialized coefficients are integral, and whether their valuations remain bounded as $m\rightarrow\infty$. We will refer to this argument (which yields information on the valuations of specializations) as the \emph{Ramification Argument} in the sequel.\\

\noindent
\emph{Proof of Theorem~\ref{Main}.} We begin by translating the geometric hypotheses of Theorem~\ref{Main} into algebraic statements.
Let $Y=\textrm{Spec}(\mathcal{A})$, so that
$\mathcal{A}|R[[Z]]$ is a $G$-Galois extension of normal rings (here $R=R_K$). The hypothesis that $Y_k$ is reduced means that
$\mathcal{A}_s:=\mathcal{A}/\varpi\mathcal{A}$
is reduced, where $\varpi$ is a uniformizer of $R$. Moreover, 
$F(Y_E)=(\mathcal{A}\otimes
K)/\mathcal{P}_E(\mathcal{A}\otimes K)=\prod_{j=1}^{n_E} E'$ for $E$ large (here $\mathcal{P}_E$ is
the maximal ideal of $R[[Z]]\otimes K$ corresponding to the point $x^E\in D_K$).

Since the proof of part 1 is long and technical, we present below a brief outline describing the strategy:
\begin{itemize}
\item[A)] Start by finding a polynomial $f(T)\in k[z][T]$ such that $F(Y_k)\cong k((z))[T]/(f(T))$.
\item[B)] Then take a suitable lifting $F(T)\in R[[Z]]_{(\varpi)}[T]$ of $f(T)$, and for each $E\in\mathcal{E}_{L|K}$, consider the specialized polynomial $F_E(T):=F(T)|_{Z=\pi_E}.$ The polynomial $F(T)$ has the property that if $y^E$ is a root of $F_E(T)$ in $K^{sep}$, then $E'=E(y^E)$ for $E$ sufficiently large.
\item[C)] Using the identification $X_K(L)=k((z))$, prove the following equality of discriminants: $\nu_{X_K(L)}(\textrm{disc}(f))=\nu_E(\textrm{disc}(F_E))$ for $E$ sufficiently large.
\item[D)] For each $E\in\mathcal{E}_{L|K}$, approximate $y^E\in E'$ by an element $\hat{y}^E\in X_K(L_E)$.
\item[E)] Show that a subnet of the net $\{\hat{y}^E\}_E$ converges to a root of $f$ in $X_K(L)^{sep}$, and that this root generates the field extension $\mathcal{K}|X_K(L)$.
\item[F)] The preceding steps combined with Krasner's Lemma allow us to conclude that $\mathcal{K}=X_L(L_E)$ for $E$ sufficiently large, and the statement about Galois groups follows from the properties of the field of norms.
\end{itemize}

\begin{rem}
The knowledgeable reader will note that the strategy above is
inspired by the proof in \cite{W} of the essential surjectivity
statement in Th\'eor\`eme 3.2.2 (reproduced above as Proposition \ref{Cats}). The main difficulty is to spread the
construction of \cite{W} over the open $p$-adic disc.
\end{rem}

\noindent
{\it Proof of part 1}\\

\noindent
A) Suppose that $Y_k\rightarrow D_k$ is generically separable, which means that
the field extension $\mathcal{K}|k((z))$ is separable, hence Galois. By the Primitive Element Theorem, there exists $\chi\in \mathcal{K}$ such that
$\mathcal{K}=k((z))[\chi]$. Moreover, we can choose $\chi$ to be integral over
$k[[z]]$, say with
minimal polynomial $f(T)\in k[[z]][T]$. Further, since $k((z))$ is infinite, we
can choose $n_s$ different primitive elements $\chi_j\in\mathcal{K}$ such that the
corresponding minimal polynomials $f_j(T)\in k[[z]][T]$ are distinct.
Even more, by Krasner's Lemma, we may assume
that each $f_j(T)\in k[z][T]$, so that in fact $f_j(T)\in \mathbb{F}_{q^l}[T]$
for some $l>0$. 
Having fixed this $l$, we take ${\mathcal{C}_Y = \{L^m \ | \ m\equiv 1 \mod l\}}$.

Setting $f(T):=\prod_{j=1}^{n_s}f_j(T)$, the Chinese Remainder Theorem 
implies that we have an isomorphism 
$$
k((z))[T]/(f(T))\cong\prod_{j=1}^{n_s}k((z))[\chi_j]=
\prod_{j=1}^{n_s}\mathcal{K}\cong F(Y_k)
$$

\noindent
B) Let $\overline{\xi}$ be the element of $F(Y_k)$ corresponding to $\overline{T}$ under this isomorphism,
and
choose a lifting, $\xi$, of $\overline{\xi}$ to
$\mathcal{A}_{(\varpi)}$. Denote the minimal
polynomial of $\xi$ over $R[[Z]]_{(\varpi)}$ by $F(T)$, so that $\overline{F}(T)=f(T)$.
Then $F(T)$ has the form
$$
F(T)=T^N+A_{N-1}(Z)T^{N-1}+\cdots +A_0(Z)\in R[[Z]]_{(\varpi)}[T].
$$
Now by the Weierstrass Preparation Theorem, the coefficients of
$F(T)$ have the form
$$
A_i(Z)=\frac{g(Z)}{Z^n+a_{n-1}Z^{n-1}+\cdots+a_0},
$$
where $g(Z)\in R[[Z]]$ and the denominator is a distinguished polynomial.
Moreover, because $\overline{F}(T)=f(T)\in k[[z]][T]$, it follows that each
$\overline{A_i}(z)\in k[[z]]$, which implies that either $\varpi | g(Z)$ 
in $R[[Z]]$ (in which case
$\overline{A_i}(z)=0$), or the Weierstrass degree of $g(Z)$ is
greater than $n$ (the degree of the denominator).

Now by the Weierstrass Argument described in section \ref{WArg}, for $E$ large we can specialize the polynomial
$F(T)$ at the point $Z=\pi_E$ to obtain the polynomial $F_E(T)\in E[T]$. 
\begin{lem}
For $E$ large, the specialized polynomial $F_E(T)$ lies in $R_E[T]$, where $R_E$ is the
valuation ring of $E$.
\end{lem}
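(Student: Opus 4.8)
The plan is to apply the Ramification Argument of Section~\ref{RArg} to each of the finitely many coefficients $A_0(Z),\dots,A_{N-1}(Z)$ of $F(T)$ separately. Fix an index $i$, and recall from the discussion preceding the lemma that
$$
A_i(Z)=\frac{g_i(Z)}{Z^{n_i}+a_{n_i-1}Z^{n_i-1}+\cdots+a_0},\qquad g_i(Z)\in R[[Z]],
$$
with the denominator a distinguished polynomial of degree $n_i$, hence of Weierstrass degree $n_i$. Applying the Weierstrass Preparation Theorem~\ref{WPT} to the numerator puts $A_i(Z)$ into the form $\varpi^{c_i}\frac{f_{1,i}(Z)}{f_{2,i}(Z)}U_i(Z)$ required by the Ramification Argument, with $c_i\geq 0$, $U_i$ a unit of $R[[Z]]$, and $f_{1,i},f_{2,i}$ distinguished of Weierstrass degrees $d_{1,i}$ and $d_{2,i}=n_i$. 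Since $f_{2,i}$ is distinguished it is prime to almost every height-one prime of $R[[Z]]$, so for $E$ large the specialization $A_i(\pi_E)\in E$ is defined (this is already part of the Weierstrass Argument invoked above to form $F_E(T)\in E[T]$), and it suffices to prove $A_i(\pi_E)\in R_E$ for $E$ large.

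Next I would invoke the dichotomy recorded just above: either $\varpi\mid g_i(Z)$ in $R[[Z]]$, i.e. $c_i\geq 1$, or $\overline{g_i}(z)\neq 0$ and $d_{1,i}>n_i$. In the first case the Ramification Argument gives, for $E$ large, $\nu_E(A_i(\pi_E))=c_i\,e(E|K)+(d_{1,i}-n_i)$; since the points $x^E$ approach the boundary of $D_K$ (Section~\ref{Disc}) the ramification index $e(E|K)$ tends to $\infty$ along the net, so this valuation tends to $+\infty$, and in particular $A_i(\pi_E)\in R_E$ for $E$ large. In the second case $d_{1,i}>n_i=d_{2,i}$ is precisely the hypothesis ``$d_1\geq d_2$'' of the Ramification Argument, which again yields $A_i(\pi_E)\in R_E$ for $E$ large (the unit $U_i$ specializing to a unit of $R_E$, so not affecting integrality). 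In either case $A_i(\pi_E)\in R_E$ for all sufficiently large $E$.

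Finally, since $i$ ranges over the finite set $\{0,\dots,N-1\}$, there is a single threshold $E_0\in\mathcal{E}_{L|K}$ past which all of $A_0(\pi_E),\dots,A_{N-1}(\pi_E)$ lie in $R_E$; for such $E$ the monic polynomial $F_E(T)=T^N+A_{N-1}(\pi_E)T^{N-1}+\cdots+A_0(\pi_E)$ has all of its coefficients in $R_E$, which is the claim. I do not expect a genuine obstacle here: the analytic content is entirely packaged in the Ramification Argument, and the only thing to observe is that the dichotomy for the reduced coefficients $\overline{A_i}(z)$ --- ``$\varpi\mid g_i$'' versus ``$\deg_W g_i>n_i$'' --- matches exactly the two regimes (``$c>0$'' and ``$d_1\geq d_2$'') in which that argument produces integral specializations. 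The only mild point of care is that the threshold ``$E$ large'' depends a priori on $i$ through $c_i$, $d_{1,i}$ and $n_i$, which is harmless since $i$ runs over a finite set.
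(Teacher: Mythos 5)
Your proof is correct and is exactly the paper's argument: the paper disposes of this lemma in one line by citing the dichotomy established just before it (either $\varpi\mid g$ or the Weierstrass degree of the numerator is at least that of the denominator) together with the Ramification Argument applied to $S(T)=F(T)$, and your write-up simply unpacks that citation coefficient by coefficient. The only extra content you add is the (harmless and correct) observation that the threshold ``$E$ large'' can be taken uniform over the finitely many coefficients.
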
 
\begin{proof}
This follows immediately from the previous remarks and the
Ramification Argument applied to $S(T)=F(T)$ in the notation of section \ref{RArg}.~$\qed$
\end{proof}

Now let $y^E$ be a root of $F_E(T)$ in $K^{\textrm{sep}}$, and consider the field extension
$E(y^E)|E$. 

\begin{lem}
For $E$ large we have $E(y^E)=E'$, where $F(Y_E)\cong\prod_{j=1}^{n_E}E'$. In particular, $L_E:=LE'=L(y^E)$, and $L(y^E)$ is Galois over $L$.
\end{lem}
\begin{proof}
Take $g$ to be the product of the
denominators of the coefficients $A_i(Z)$ of $F(T)\in R[[Z]]_{(\varpi)}[T]$.
Then the conductor of the subring 
$(R[[Z]]\otimes K)_g[\xi]\subset(\mathcal{A}\otimes K)_g$ 
defines a closed subset of $Y_K$, and if
$x^E$ lies outside the image of this set in $D_K$, then the splitting of $F(T)$
mod $\mathcal{P}_E$ determines the fiber $Y_E$ (see \cite{N},
Prop. I.8.3).~$\qed$
\end{proof}

\noindent
C) Since $f(T)\in k[[z]][T]$ is separable, we have
$\textrm{disc}(f) 
\ne 0$. Using the identification $k((z))=X_K(L)$, the discriminant of $f$ becomes a coherent system of norms: $\textrm{disc}(f)=(\textrm{disc}(f)_E)_E$. Since $r(E):=\lceil\frac{p-1}{p}i(L|E)\rceil$, we know by Proposition~\ref{APFinfty} that
$\lim_{E\in\mathcal{E}_{L|K}}r(E)=\infty$, so there exists $E_0$ such that for
$E\ge E_0$ we have 
\begin{eqnarray}\label{E_0}
r(E)\ge r(E_0)>\nu_{X_K(L)}(\textrm{disc}(f)):=\nu_E(\textrm{disc}(f)_E).
\end{eqnarray}
Now
$$
f(T)=\overline{F}(T)
=T^N+\overline{A_{N-1}}(z)T^{N-1}+\cdots+\overline{A_0}(z)\in
k[z][T].
$$
As above, each 
coefficient $\overline{A_i}(z)$ corresponds to a coherent system of norms
$\alpha_i=(\alpha_{i,E})_E$. Hence, we can write
$$
f(T)=T^N+\alpha_{N-1}T^{N-1}+\cdots+\alpha_0\in R_{X_K(L)}[T].
$$
Now let $f_E(T)\in R_E[T]$ be the polynomial obtained from $f(T)$ by selecting
the $E$th component from each coefficient:
$$
f_E(T):=T^N+\alpha_{{N-1},E}T^{N-1}+\cdots+\alpha_{0,E}\in R_E[T].
$$
For the convenience of the reader, we summarize the notation introduced so far:
\begin{itemize}
\item[-] $\textrm{disc}(f)_E\in E^*$ is the $E$th component of the discriminant of the polynomial $f(T)\in X_K(L)[T]$.
\item[-] $f_E(T)\in R_E[T]$ is the polynomial obtained from $f(T)$ by selecting the $E$th component of each coefficient. In particular, $\textrm{disc}(f_E)\in E$ refers to the discriminant of the polynomial $f_E$, and $\textrm{disc}(f)_E\ne\textrm{disc}(f_E)$ in general.
\item[-] $F_E(T)\in R_E[T]$ is the specialization of the polynomial $F(T)\in R[[Z]]_{(\varpi)}[T]$ at $Z=\pi_E$.
\item[-] $E_0\in\mathcal{E}_{L|K}$ has the property that $E\ge E_0$ implies that $r(E_0)>\nu_E(\textrm{disc}(f)_E)$.
\end{itemize}

\begin{lem}\label{disc}
For $E\in\mathcal{C}_Y$ large, we have
$\nu_{X_K(L)}(\emph{disc}(f))=\nu_{E}(\emph{disc}(F_E))$.
\end{lem}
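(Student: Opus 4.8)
The plan is to show that the two polynomials $F_E(T)$ and $f_E(T)$ in $R_E[T]$ have the same reduction modulo $\mathfrak{m}_E^{r(E)}$ for $E$ large, and then to observe that the discriminant — being a polynomial function of the coefficients — respects this congruence, so that the valuations of the two discriminants agree provided $r(E)$ exceeds $\nu_E(\mathrm{disc}(f)_E)$, which is exactly the content of $(\dag)$. So the heart of the matter is the coefficient-by-coefficient comparison $A_i(\pi_E) \equiv \alpha_{i,E} \bmod \mathfrak{m}_E^{r(E)}$ for each $i$.

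First I would recall the two descriptions of the $i$th coefficient. On the one hand, $\overline{A_i}(z) \in k[[z]]$ is, under the identification $k((z)) \cong X_K(L)$ sending $z \mapsto \pi$, a coherent system of norms $\alpha_i = (\alpha_{i,E})_E$; its $E$th component is $\alpha_{i,E}$. On the other hand, $A_i(\pi_E)$ is the Weierstrass specialization at $Z = \pi_E$ of the power series $A_i(Z) \in R[[Z]]_{(\varpi)}$ whose reduction is $\overline{A_i}(z)$. The key point is that, by Lemma~\ref{SpecialCong}, the coherent system of norms $\alpha_i$ satisfies $\alpha_{i,E} \equiv g_{\alpha_i, E}(\pi_E) \bmod \mathfrak{m}_E^{r(E)}$, where $g_{\alpha_i, E}(z)$ is obtained from $\overline{A_i}(z) = \sum_j a_{ij} z^j$ by replacing each coefficient $a_{ij}$ with its Teichm\"uller lift $(f_{L|K}(a_{ij}))_E = [a_{ij}^{1/[E:L^1]}]$. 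Here the restriction to $E \in \mathcal{C}_Y = \{L^m : m \equiv 1 \bmod l\}$ enters: for such $E$, $[E:L^1]$ is a power of $q^l$ where $\mathbb{F}_{q^l}$ contains all the coefficients $a_{ij}$ (since we arranged $f_j(T) \in \mathbb{F}_{q^l}[T]$), so $a_{ij}^{1/[E:L^1]} = a_{ij}$ and the Teichm\"uller lift $[a_{ij}]$ is a fixed element of $R_E$ independent of $E$; thus $g_{\alpha_i,E}(z)$ is simply the Teichm\"uller lift $\tau(\overline{A_i})(z)$ of the reduced series, with $\mathbb{F}_{q^l}$-coefficients, pulled into $R_E$.

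It then remains to compare $A_i(\pi_E)$ with $\tau(\overline{A_i})(\pi_E)$. Write $A_i(Z) = g(Z)/(Z^n + \cdots + a_0)$ with distinguished denominator, as in the excerpt; clearing denominators reduces the comparison to comparing two genuine power series in $R[[Z]]$ with the same reduction mod $\varpi$, namely $g(Z)$ and a lift of $\overline{g}(z)$, evaluated at $Z = \pi_E$. Since $\pi_E$ is a uniformizer of $E$ and $|\pi_E|_K \to 1$, the Ramification Argument of section~\ref{RArg} shows that replacing $g(Z)$ by any series congruent to it mod $\varpi$ changes the specialization $g(\pi_E)$ by something of valuation $\geq e_m \to \infty$; hence for $E$ large this change, and the corresponding change in $A_i(\pi_E)$ after dividing by the denominator (whose specialization has bounded valuation by the Ramification Argument), has $\mathfrak{m}_E$-adic valuation exceeding $r(E_0)$. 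Combining, $A_i(\pi_E) \equiv \alpha_{i,E} \bmod \mathfrak{m}_E^{r(E)}$ for all $i$ and all sufficiently large $E \in \mathcal{C}_Y$, so $F_E(T) \equiv f_E(T) \bmod \mathfrak{m}_E^{r(E)}$, and therefore $\mathrm{disc}(F_E) \equiv \mathrm{disc}(f_E) = \mathrm{disc}(f)_E \bmod \mathfrak{m}_E^{r(E)}$; since $r(E) > \nu_E(\mathrm{disc}(f)_E)$ by $(\dag)$, the two have equal valuation, which is the claim.

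The main obstacle is bookkeeping the three separate sources of error — the Teichm\"uller-versus-actual-coefficients discrepancy controlled by Lemma~\ref{SpecialCong}, the lift-of-$\overline{g}$ ambiguity controlled by the Ramification Argument, and the denominator — and checking that \emph{all} of them can be absorbed into $\mathfrak{m}_E^{r(E)}$ simultaneously once $E$ is large; in particular one must be careful that the "large $E$" needed here is compatible with (and can be taken to exceed) the $E_0$ fixed in $(\dag)$, so that the single inequality $r(E) \geq r(E_0) > \nu_E(\mathrm{disc}(f)_E)$ does all the work at the end.
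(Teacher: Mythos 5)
Your proposal follows essentially the same route as the paper: write $F(T)=\tau(f)(T)+\varpi g(Z,T)$, use Lemma~\ref{SpecialCong} together with the observation that for $E\in\mathcal{C}_Y$ the series $\overline{A_i}_{,E}(Z)$ equals the Teichm\"uller lift $\tau(\overline{A_i})(Z)$ (because the coefficients lie in $\mathbb{F}_{q^l}$ and $[E:L^1]$ is a power of $q^l$), control the $\varpi g(\pi_E,T)$ term by the Ramification Argument, and finish with $(\dag)$. The identification of the three error sources and the role of $\mathcal{C}_Y$ are exactly right.

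Two small points to tighten. First, you write $\mathrm{disc}(f_E)=\mathrm{disc}(f)_E$ as an equality; this is false in general, since $\mathrm{disc}(f)$ is computed with field-of-norms arithmetic in $X_K(L)$ and taking $E$-components is only approximately a ring homomorphism. What is true, and what the paper uses, is $\nu_E(\mathrm{disc}(f)_E-\mathrm{disc}(f_E))\ge r(E)$ by Proposition~\ref{ApproxAPF}(ii); you need this extra congruence as a separate term in the telescoping sum. Second, the discrepancy coming from $\varpi g(\pi_E,T)$ has valuation bounded below by $e(E|K)-B$ (with $B$ the maximal denominator degree), which tends to infinity but need not dominate $r(E)$ for every $E$; so the clean statement is $F_E\equiv f_E \bmod \mathfrak{m}_E^{\min\{r(E),\,e(E|K)-B\}}$ rather than $\bmod\ \mathfrak{m}_E^{r(E)}$. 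Since both quantities eventually exceed the fixed threshold $r(E_0)>\nu_{X_K(L)}(\mathrm{disc}(f))$, the conclusion is unaffected --- and your closing paragraph shows you are aware that only the fixed bound $r(E_0)$ is needed --- but the congruences should be stated with the minimum to be literally correct.
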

\begin{proof}
Define $\tau:k[[z]] - {0}\rightarrow R[[Z]]$ to be the coefficient-wise Teichm\"uller lifting of power series:
$$
\tau(\sum_i a_iz^i):= \sum_i [a_i]Z^i.
$$
Then let
$G(T)\in R[Z][T]$ be the Teichm\"uller lifting of $f(T)$:
$$
G(T):=\tau(f)(T)=T^N+\tau(\overline{A_{N-1}})(Z)T^{N-1}+\cdots+\tau(\overline{A_0})(Z).
$$
Both $G$ and $F$
reduce mod $\varpi$ to $f$, hence 
$F(T)=G(T)+\varpi g(Z,T)$
for some $g(Z,T)\in R[[Z]]_{(\varpi)}[T]$. Specializing at $Z=\pi_E$ for $E\in\mathcal{C}_Y$ now yields the
equation
\begin{eqnarray}\label{specialeq}
F_E(T)=f_E(T)+\pi_E^{r(E)}h_E(T)+\varpi g(\pi_E,T)
\end{eqnarray}
for some $h_E(T)\in R_E[T]$. Indeed, by Lemma~\ref{SpecialCong}, we have
$\overline{A_i}_{,E}(\pi_E)\equiv\alpha_{i,E} \mod \mathfrak{m}_{E}^{r(E)}.$
But $[E:L^1]=q^{lt_E}=(q^l)^{t_E}$ for $E\in\mathcal{C}_Y$. The
operation of raising 
to the $q^l$th power on $\mathbb{F}_{q^l}$ is the identity, and since the
coefficients of $\overline{A_i}(z)$ lie in $\mathbb{F}_{q^l}$, it follows that
$\overline{A_{i}}_{,E}(Z)=\tau(\overline{A_i})(Z)$. Hence
$\tau(\overline{A_i})(\pi_E)\equiv\alpha_{i,E} \mod \mathfrak{m}_{E}^{r(E)}$,
from which equation~(\ref{specialeq}) follows immediately.

Note that the discriminant is given by a polynomial expression of the coefficients, so there is a polynomial $D\in\mathbb{Z}[x_0,\cdots,x_{N-1}]$ such that 
\begin{eqnarray*}
\textrm{disc}(f)&=&D(\alpha_0,\cdots,\alpha_{N-1})\in X_K(L) \qquad \textrm{and}\\
\textrm{disc}(f_E)&=&D(\alpha_{0,E},\cdots,\alpha_{N-1,E})\in E.
\end{eqnarray*}
But by Proposition~\ref{ApproxAPF}, we have that
$$
D(\alpha_0,\cdots,\alpha_{N-1})_E\equiv D(\alpha_{0,E},\cdots,\alpha_{N-1,E}) \quad \textrm{mod}
\ \ \mathfrak{m}_E^{r(E)}.
$$
This means that 
$\nu_{E}(\textrm{disc}(f)_{E}-\textrm{disc}(f_E))\ge r(E)$.  

Moreover, consideration of the Taylor expansion for $D(x_0,\dots,x_{N-1})$ at the point $(\alpha_{0,E},\dots,\alpha_{N-1,E})$ shows that for $E$ large we have
\begin{eqnarray*}
\nu_{E}(\textrm{disc}(f_E)-\textrm{disc}(F_E))&=&
\nu_{E}(\textrm{disc}(f_E)-\textrm{disc}(f_E+\pi_E^{r(E)}h_E+\varpi g(\pi_E,T)))\\
&\ge& r(E_0).
\end{eqnarray*}
Indeed, $r(E)\rightarrow \infty$, and the Ramification Argument (section \ref{RArg}) applied to $S(T)=\varpi g(Z,T)$ shows that the valuations of the coefficients of $\varpi g(\pi_E, T)$ also go to infinity.

Putting the previous two paragraphs together yields the inequality
\begin{eqnarray*}
\nu_{E}(\textrm{disc}(f)_{E}-\textrm{disc}(F_E))&=&\nu_{E}(\textrm{disc}(f)_{E}-\textrm{disc}(f_E)+
\textrm{disc}(f_E)-\textrm{disc}(F_E))\\
&\ge&\min\{r(E),r(E_0)\}=r(E_0).
\end{eqnarray*}
Since $\nu_E(\textrm{disc}(f)_E)<r(E_0)$ by (\ref{E_0}), we conclude that for $E\in\mathcal{C}_Y$ large we must have
$$
\nu_{X_K(L)}(\textrm{disc}(f)):=\nu_{E}(\textrm{disc}(f)_{E})=\nu_{E}(\textrm{disc}(F_E)). \ \ \qed
$$
\end{proof}

\noindent
D) We now introduce the
following lemma from \cite{W}, adapted to our context:

\begin{lem}\emph{(\cite{W}, Lemme 3.2.5.4)}\label{LinDis}
For $E\subset{C}_Y$ large, the extensions $E'|E$ and $L|E$ are linearly disjoint.
Moreover, we have
$$
i(L_E|E')=\psi_{E'|E}(i(L|E))\ge i(L|E).
$$
\end{lem}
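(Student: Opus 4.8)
The plan is to reduce everything to the known statement in $\cite{W}$ (Lemme 3.2.5.4) once we have produced the key input, which is exactly the different‐preservation established in Lemma~\ref{disc}. Recall that Lemme 3.2.5.4 of $\cite{W}$ asserts precisely this conclusion under the hypothesis that the polynomial cutting out $\tilde E$ over $E$ agrees, up to a congruence of level $r(E)$, with the $E$-component of a polynomial defined over the field of norms $X_K(L)$; the point is that such a congruence forces the ramification of $\tilde E|E$ to mimic the ramification of $\mathcal{K}|X_K(L)$, and hence the behavior of the index $i(-)$ under the base change to $L$ to be governed by $\psi_{\tilde E|E}$. Equation~(\ref{specialeq}) from the proof of Lemma~\ref{disc} says exactly that $F_E(T) = f_E(T) + (\text{terms of valuation} \ge r(E_0))$ for $E\in\mathcal{C}_Y$ large, where $f_E(T)$ is the $E$-component of the polynomial $f(T)\in R_{X_K(L)}[T]$. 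This is the hypothesis of Wintenberger's lemma, so the first step is simply to invoke it.

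More concretely, I would proceed as follows. First, fix $E\in\mathcal{C}_Y$ large enough that: (a) the conclusion of Lemma~\ref{disc} holds, i.e. $\nu_{X_K(L)}(\mathrm{disc}(f)) = \nu_E(\mathrm{disc}(F_E))$; (b) $E' = \tilde E$ and $L_E = \tilde L_E$, as noted just before the lemma; and (c) $r(E) > \nu_{X_K(L)}(\mathrm{disc}(f))$, which is condition $(\dag)$. Second, observe that (a) together with (c) implies that $\tilde E = E(y^E)$ is a \emph{separable} extension of $E$ of degree $N$ with the \emph{same} different exponent (well, the same discriminant valuation) as the separable extension of $X_K(L)$ generated by a root of $f$ — call the latter $\mathcal{K}_0$, a subextension of $\mathcal{K}$. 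Third, since $\mathrm{disc}(f)_E \equiv \mathrm{disc}(F_E)$ to precision exceeding $\nu_E(\mathrm{disc}(F_E))$, the standard Krasner-type argument shows that $\tilde E$ and $\mathcal{K}_0$ correspond to one another under the field of norms equivalence $X_{L|K}(-)$ of Proposition~\ref{Cats}; in particular $L|E$ and $\tilde E|E$ are linearly disjoint (the field of norms of $\tilde E$ makes sense only because $\tilde L_E|\tilde E$ is APF and is "new" relative to $L$, reflecting linear disjointness). Fourth, the formula $i(\tilde L_E|\tilde E) = \psi_{\tilde E|E}(i(L|E))$ is then the transitivity property of the inverse Herbrand function combined with part iv) of Proposition~\ref{APFprops}: $L|E$ and $\tilde E|E$ being linearly disjoint forces $L_E|L$ and $\tilde E|E$ to have "the same" ramification data, so $\psi_{\tilde L_E|E} = \psi_{\tilde L_E|\tilde E}\circ\psi_{\tilde E|E}$ restricted appropriately gives the displayed identity; the inequality $\psi_{\tilde E|E}(i(L|E))\ge i(L|E)$ is immediate since $\psi_{\tilde E|E}(u)\ge u$ for all $u\ge 0$.

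The main obstacle I expect is item (b) above — ensuring that for $E$ large the fiber $Y_E$ is \emph{actually} cut out by the reduction of $F(T)$ modulo $\mathcal P_E$, so that $E' = \tilde E$ and $L_E = \tilde L_E$. This is the content of the conductor argument sketched in the paragraph preceding the lemma: one picks $g\in R[[Z]]$ clearing the denominators of the $A_i(Z)$, so that $\xi$ lives in $(\mathcal A\otimes K)_g$, and then argues that the conductor of $(R[[Z]]\otimes K)_g[\xi]$ in $(\mathcal A\otimes K)_g$ cuts out a proper closed subset of $Y_K$, which misses the net $\{x^E\}_E$ for $E$ large because the $x^E$ approach the boundary. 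Granting this (and it is genuinely the place where the geometry of the disc enters, via the Weierstrass and Ramification Arguments of Section~\ref{WArg} and Section~\ref{RArg}), the rest is formal: Lemma~\ref{disc} supplies the numerical coincidence of discriminants, and Wintenberger's Lemme 3.2.5.4 does the rest verbatim. I would therefore present the proof as: "This is immediate from Lemma~\ref{disc} and (\cite{W}, Lemme 3.2.5.4), once one observes that for $E\in\mathcal{C}_Y$ large the fiber $Y_E$ is determined by the reduction of $F(T)$, so that $\tilde E = E'$ and $\tilde L_E = L_E$."
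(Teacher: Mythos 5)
Your proposal matches the paper's treatment: the paper offers no independent proof of Lemma~\ref{LinDis}, stating only that Wintenberger's Lemme 3.2.5.4 ``holds in our situation, since it is a direct consequence of Lemma~\ref{disc},'' with the identification $\tilde{E}=E'$, $\tilde{L}_E=L_E$ handled by the conductor argument in the preceding paragraph, exactly as you describe. Your added commentary on why the congruence~(\ref{specialeq}) and the discriminant equality feed into Wintenberger's hypotheses is consistent with that reduction, so this is essentially the same approach.
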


\begin{proof}
The proof in \cite{W} is valid once the following notational identifications have been made: replace $E_n$ by $E$, $E_n'$ by $E'$, and $L_n'$ by  $L_E$. Also, replace Wintenberger's polynomials $f_n$ by our specialized polynomials $F_E$. The key ingredient of the proof is Lemma \ref{disc} proven above. $\qed$
\end{proof}

Since for $E\subset\mathcal{C}_Y$ large, $L|E$ is totally wildly ramified, it follows from this lemma that
$i(L_E|E')\ge i(L|E)> 0,$
so $L_E|E'$ is totally wildly ramified. Hence,
Proposition~\ref{ApproxAPF} says that there exists 
$\hat{y}^E=(\hat{y}^E_B)_B\in X_K(L_E)$
such that 
$\nu_{E'}(\hat{y}^E_{E'}-y^E)\ge r(E').$\\

\noindent
E) We wish to prove that $\{\hat{y}^E\}_E$ possesses a subnet converging to a root of $f$ in $X_K(L)^{sep}$.
\begin{lem}\label{limit}
$\lim_{E\in\mathcal{C}_Y}f(\hat{y}^E)=0$.
\end{lem}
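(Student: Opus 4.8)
\emph{Proof sketch.} The plan is to reduce the assertion to a single valuation estimate and then chase a chain of approximations whose error terms all diverge. All the fields $X_K(\tilde L_E)=X_{L|K}(L_E)$ embed into the common valued field $X_K(L)^{\mathrm{sep}}=X_{L|K}(K^{\mathrm{sep}})$, and by Lemma~\ref{LinDis} — together with the identification $E'=\tilde E$ for $E$ large — the ramification index $e(\tilde L_E|L)=e(\tilde E|E)$ stays bounded (indeed $[\tilde E:E]\le|G|$); so the claim $\lim_{E\in\mathcal C_Y}f(\hat y^E)=0$ is equivalent to $\nu_{X_K(\tilde L_E)}\bigl(f(\hat y^E)\bigr)\to\infty$. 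For $E\in\mathcal C_Y$ large, $\tilde E$ will lie far enough along in $\mathcal E_{\tilde L_E|\tilde L_E^0}$ that this normalized valuation is computed on the $\tilde E$-component, so it will suffice to show $\nu_{\tilde E}\bigl(f(\hat y^E)_{\tilde E}\bigr)\to\infty$.

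Writing $f(T)=T^N+\alpha_{N-1}T^{N-1}+\cdots+\alpha_0$ with $\alpha_i\in R_{X_K(L)}\subset R_{X_K(\tilde L_E)}$, I would argue in three steps. \textbf{(a)} Since the component of a product is the product of the components and components are additive modulo $\mathfrak m_{\tilde E}^{r(\tilde E)}$ by Proposition~\ref{ApproxAPF}(ii), one gets $f(\hat y^E)_{\tilde E}\equiv\tilde f_{\tilde E}\bigl(\hat y^E_{\tilde E}\bigr)\pmod{\mathfrak m_{\tilde E}^{r(\tilde E)}}$, where $\tilde f_{\tilde E}\in R_{\tilde E}[T]$ is the monic polynomial with coefficients the $\tilde E$-components of the $\alpha_i$; here $r(\tilde E)=\lceil\frac{p-1}{p}i(\tilde L_E|\tilde E)\rceil\to\infty$ because $i(\tilde L_E|\tilde E)\ge i(L|E)\to\infty$ by Lemma~\ref{LinDis} and Proposition~\ref{APFinfty}. \textbf{(b)} Since $\hat y^E_{\tilde E}\in R_{\tilde E}$ with $\nu_{\tilde E}\bigl(\hat y^E_{\tilde E}-y^E\bigr)\ge r(\tilde E)$ and $\tilde f_{\tilde E}$ is monic with integral coefficients, $\nu_{\tilde E}\bigl(\tilde f_{\tilde E}(\hat y^E_{\tilde E})-\tilde f_{\tilde E}(y^E)\bigr)\ge r(\tilde E)$. \textbf{(c)} Finally I would compare $\tilde f_{\tilde E}$ with the specialized polynomial $f_E$ of equation~(\ref{specialeq}): applying Lemma~\ref{SpecialCong} over $X_K(\tilde L_E)$ (for a uniformizer chosen over $\pi$) and over $X_K(L)$, and using that the coefficients of the $\overline{A_i}$ lie in $\mathbb F_{q^l}$ while $[E:L^1]$ is a power of $q^l$ for $E\in\mathcal C_Y$, one should find that $\tilde f_{\tilde E}$ and $f_E$ agree up to coefficients whose $\tilde E$-valuation tends to $\infty$, hence $\nu_{\tilde E}\bigl(\tilde f_{\tilde E}(y^E)-f_E(y^E)\bigr)\to\infty$. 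But $y^E$ is a root of $F_E$, so (\ref{specialeq}) gives $f_E(y^E)=-\pi_E^{r(E)}h_E(y^E)-\varpi g(\pi_E,y^E)$, whose $\tilde E$-valuation tends to $\infty$ by the Ramification Argument of section~\ref{RArg} (applied to $\varpi g(Z,T)$) together with $r(E)\to\infty$. Assembling (a), (b) and (c) then yields $\nu_{\tilde E}\bigl(f(\hat y^E)_{\tilde E}\bigr)\to\infty$.

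The step I expect to be the main obstacle is (c): one must track precisely how the natural inclusion $X_K(L)\hookrightarrow X_K(\tilde L_E)$ acts on components and on uniformizers, so that the two instances of Lemma~\ref{SpecialCong} can be lined up. Linear disjointness of $L|E$ and $\tilde E|E$ (Lemma~\ref{LinDis}) identifies the ramification of $\tilde L_E|L$ with that of the fixed finite extension $E'|E$, keeping this comparison uniform in $E$, and the $\mathbb F_{q^l}$-coefficient restriction defining $\mathcal C_Y$ ensures that no spurious $p$-power roots of the coefficients intervene. Everything else should reduce to the error bounds of Proposition~\ref{ApproxAPF}, the Ramification Argument, and the divergences $r(E),\,r(\tilde E),\,i(L|E)\to\infty$.
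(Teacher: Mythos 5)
Your outline follows the paper's proof almost step for step: reduce to showing $\nu_{\tilde{E}}\bigl(f(\hat{y}^E)_{\tilde{E}}\bigr)\to\infty$ (using that $\tilde{L}_E|\tilde{E}$ is totally ramified for $E$ large), pass from $f(\hat{y}^E)_{\tilde{E}}$ to $f_{\tilde{E}}(\hat{y}^E_{\tilde{E}})$ via Proposition~\ref{ApproxAPF}, pass from $\hat{y}^E_{\tilde{E}}$ to the actual root $y^E$ using $\nu_{\tilde{E}}(\hat{y}^E_{\tilde{E}}-y^E)\ge r(\tilde{E})$, and finish with $F_E(y^E)=0$, equation~(\ref{specialeq}), and the Ramification Argument applied to $\varpi g(Z,T)$; the resulting bound $\min\{r(E),\,e(E|K)-B\}\to\infty$ is exactly the paper's. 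The one place where you diverge is your step (c), which you rightly flag as the delicate point -- but the difficulty you anticipate there dissolves, and your proposed fix (two applications of Lemma~\ref{SpecialCong}, one over $X_K(\tilde{L}_E)$ with a uniformizer ``chosen over $\pi$'') is both harder than necessary and not really well posed, since the coefficients $\alpha_i$ have no preferred power-series expansion in a uniformizer of $X_K(\tilde{L}_E)$ compatible with their expansion in $\pi$. The paper instead observes that linear disjointness of $L|E$ and $\tilde{E}|E$ (Lemma~\ref{LinDis}) gives the \emph{exact} identity $f_{\tilde{E}}=f_E$: for $\alpha\in X_K(L)\subset X_K(\tilde{L}_E)$ the $\tilde{E}$-component of $\alpha$ is precisely $\alpha_E$, because $N_{B\tilde{E}|\tilde{E}}$ restricts to $N_{B|E}$ on $B$ when $B$ and $\tilde{E}$ are linearly disjoint over $E$. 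So no second approximation is needed in (c); the only error terms are the ones you already control in (a), (b), and the final evaluation $f_E(y^E)=-\pi_E^{r(E)}h_E(y^E)-\varpi g(\pi_E,y^E)$. With that substitution your argument is the paper's argument.
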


\begin{proof}
Note that $[X_K(L_E):X_K(L)]\le\deg(F_E)=\deg(F)=\deg(f)$, so we have
$$
\nu_{X_K(L)}(f(\hat{y}^E))\ge \frac{1}{\deg(f)}\nu_{X_K(L_E)}(f(\hat{y}^E)).
$$
Moreover, as mentioned above, $L_E|E'$ is totally ramified, hence
$$
\nu_{X_K(L_E)}(f(\hat{y}^E))=\nu_{E'}(f(\hat{y}^E)_{E'}).
$$
Denote by $f_{E'}\in E'[T]$ the polynomial
obtained by replacing each coefficient of $f\in X_K(L)[T]\subset
X_K(L_E)[T]$ by its component in $E'$. Then by the linear
disjointness of $L|E$ and $E'|E$ it follows that
$f_E=f_{E'}$.

Now by Proposition~\ref{ApproxAPF}, $\nu_{E'}(f(\hat{y}^E)_{E'}-f_{E'}(\hat{y}^E_{E'}))\ge r(E').$
On the other hand, by (\ref{specialeq}) and the Ramification Argument (again applied to ${S(T)=\varpi g(Z,T)}$) we have
\begin{eqnarray*}
\nu_{E'}(f_E(\hat{y}^E_{E'})-F_E(\hat{y}^E_{E'}))&=&
\nu_{E'}(f_E(\hat{y}^E_{E'})-f_E(\hat{y}^E_{E'})-\pi_E^{r(E)}h_E(\hat{y}^E_{E'})\\
&&-\varpi g(\pi_E,\hat{y}^E_{E'}))\\
&=&\nu_{E'}(\pi_E^{r(E)}h_E(\hat{y}^E_{E'})+
\varpi g(\pi_E,\hat{y}^E_{E'}))\\
&\ge&\min\{r(E), \nu_{E}(\varpi)-B\}\\
&=&\min\{r(E), e(E|K)-B\},
\end{eqnarray*}
where $B$ is the maximal degree of the denominators in the coefficients of 
${g\in R[[Z]]_\mathfrak{m}[T]}$. 
Thus, we see that
\begin{eqnarray*}
\nu_{E'}(f(\hat{y}^E)_{E'}-F_E(\hat{y}^E_{E'}))&=&
\nu_{E'}(f(\hat{y}^E)_{E'}-
f_{E'}(\hat{y}^E_{E'})+f_{E'}(\hat{y}^E_{E'})
-F_E(\hat{y}^E_{E'}))\\
&=&\nu_{E'}(f(\hat{y}^E)_{E'}-f_{E'}(\hat{y}^E_{E'})
+f_E(\hat{y}^E_{E'})
-F_E(\hat{y}^E_{E'}))\\
&\ge&
\min\{r(E),e(E|K)-B\},
\end{eqnarray*}
where we have used the facts that $f_E=f_{E'}$ and $r(E')\ge r(E)$.

Consideration of the Taylor expansion of $F_E(T)$ at the point $T=y^E$, 
together with the fact that $\nu_{E'}(\hat{y}^E_{E'}-y^E)\ge r(E')$, shows that 
$\nu_{E'}(F_E(\hat{y}^E_{E'}))\ge r(E')$. Hence
$$
\nu_{E'}(f(\hat{y}^E)_{E'})=
\nu_{E'}(f(\hat{y}^E)_{E'}-F_E(\hat{y}^E_{E'})+F_E(\hat{y}^E_{E'}))\ge
\min\{r(E),e(E|K)-B\}.
$$
Thus we have shown that 
\begin{eqnarray*}
\nu_{X_K(L)}(f(\hat{y}^E))&\ge&
\frac{1}{\deg(f)}(\nu_{E'}(f(\hat{y}^E)_{E'}))
\ge\frac{1}{\deg(f)}\min\{r(E),e(E|K)-B\}.
\end{eqnarray*}
But $r(E)\rightarrow\infty$ for $E$ large, and since $B$ is a
constant, we also have ${e(E|K)-B\rightarrow\infty}$. It follows that 
$\nu_{X_K(L)}(f(\hat{y}^E))\rightarrow\infty$ so that
$\lim_{E\in\mathcal{C}_Y}f(\hat{y}^E)=0$
as claimed.~$\qed$
\end{proof}

\noindent
Replacing the net $\{\hat{y}^E\}_E$ by a subnet, we may assume that it
converges to a root $\tilde{\chi}$ of $f$. But then $\tilde{\chi}$ is conjugate to one
of the roots $\chi_j$ from the beginning of this proof, and since $\mathcal{K}|k((z))$ is Galois, we have that
$\mathcal{K}=k((z))(\chi_j)=k((z))(\tilde{\chi})$.\\

\noindent
F) By Krasner's Lemma,
$\tilde{\chi}\in X_K(L)(\hat{y}^E)\subset X_K(L_E)$ for $E\in\mathcal{C}_Y$ large. This implies that
$\mathcal{K}\subset X_K(L_E)$ for $E\in\mathcal{C}_Y$ large, and I claim that this inclusion
is actually an equality. For this we need a simple preliminary lemma.

Note that if $\sigma\in \textrm{Gal}(L_E|L)$, then
$X_{L|K}(\sigma)\in\textrm{Gal}(X_K(L_E)|X_K(L))$ and we have by definition
$$
X_{L|K}(\sigma)(\hat{y})=(\sigma(\hat{y}_B))_{B\in\mathcal{E}_{L_E|K}} \qquad \forall \hat{y}\in
X_K(L_E).
$$

\begin{lem}\label{GalNorm} 
Given $\sigma\in \textrm{Gal}(L_E|L)$, suppose that
$y\in E'$ is such that
${\nu_{E'}(\sigma(y)-y)<r(E').}$
Using Proposition~\ref{ApproxAPF}, choose an element $\hat{y}\in X_K(L_E)$ such that
$\nu_{E'}(\hat{y}_{E'}-y)\ge r(E').$
Then
$$
\nu_{X_K(L_E)}(X_{L|K}(\sigma)(\hat{y})-\hat{y})=
\nu_{E'}(\sigma(y)-y).
$$
\end{lem}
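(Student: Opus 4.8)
The plan is to compare the action of $X_{L|K}(\sigma)$ on $\hat y$ with the action of $\sigma$ on $y$ componentwise at $\tilde E$, using the approximation property of Proposition~\ref{ApproxAPF} together with the explicit description $X_{L|K}(\sigma)(\hat y) = (\sigma(\hat y_B))_B$. First I would record the trivial estimate coming directly from the definition: the $\tilde E$-component of $X_{L|K}(\sigma)(\hat y) - \hat y$ is $\sigma(\hat y_{\tilde E}) - \hat y_{\tilde E}$, so since $\tilde L_E|\tilde E$ is totally ramified (established just before the lemma, via Lemma~\ref{LinDis} and the fact that $L|E$ is totally wildly ramified for $E\in\mathcal C_Y$ large), we have
$$
\nu_{X_K(\tilde L_E)}(X_{L|K}(\sigma)(\hat y) - \hat y) = \nu_{\tilde E}(\sigma(\hat y_{\tilde E}) - \hat y_{\tilde E}).
$$
So everything reduces to showing $\nu_{\tilde E}(\sigma(\hat y_{\tilde E}) - \hat y_{\tilde E}) = \nu_{\tilde E}(\sigma(y) - y)$.

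For this I would write $\hat y_{\tilde E} = y + \delta$ with $\nu_{\tilde E}(\delta) \ge r(\tilde E)$ by the choice of $\hat y$, so that $\sigma(\hat y_{\tilde E}) - \hat y_{\tilde E} = (\sigma(y) - y) + (\sigma(\delta) - \delta)$. Since $\sigma$ acts on $\tilde L_E$ and fixes $\tilde E$ setwise (it fixes $L$, but we need it to preserve $\nu_{\tilde E}$, which it does because $\sigma\in\mathrm{Gal}(\tilde L_E|L)$ restricts to an automorphism of the completion fixing the valuation), we get $\nu_{\tilde E}(\sigma(\delta)) = \nu_{\tilde E}(\delta) \ge r(\tilde E)$, hence $\nu_{\tilde E}(\sigma(\delta) - \delta) \ge r(\tilde E)$. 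On the other hand, the hypothesis is exactly $\nu_{\tilde E}(\sigma(y) - y) < r(\tilde E)$. The ultrametric inequality, applied to the sum of a term of valuation $< r(\tilde E)$ and a term of valuation $\ge r(\tilde E)$, then forces
$$
\nu_{\tilde E}(\sigma(\hat y_{\tilde E}) - \hat y_{\tilde E}) = \nu_{\tilde E}(\sigma(y) - y),
$$
which combined with the first display gives the claim.

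The one genuinely delicate point — the ``main obstacle'' such as it is — is verifying that $\sigma$ really does preserve $\nu_{\tilde E}$ on the relevant completions so that $\nu_{\tilde E}(\sigma(\delta)) = \nu_{\tilde E}(\delta)$; this needs the fact that $\nu_{\tilde E}$ extends uniquely to $\tilde L_E$ (it is complete discretely valued) and that $\sigma$, being a $K$-automorphism of $\tilde L_E$, is automatically an isometry for that unique extension, which is legitimate since $\tilde L_E|\tilde E$ is a (finite or APF) extension to which $\nu_{\tilde E}$ prolongs uniquely. Everything else is a one-line application of the non-archimedean triangle inequality and the definition of $X_{L|K}(\sigma)$. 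No new machinery beyond Proposition~\ref{ApproxAPF} and the total ramification of $\tilde L_E|\tilde E$ already in hand is required.
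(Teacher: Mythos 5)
Your proposal is correct and is exactly the ``straightforward computation using Proposition~\ref{ApproxAPF}'' that the paper leaves to the reader, so there is nothing to compare at the level of strategy. One point of precision: the $\tilde{E}$-component of $X_{L|K}(\sigma)(\hat{y})-\hat{y}$ is \emph{not} equal to $\sigma(\hat{y}_{\tilde{E}})-\hat{y}_{\tilde{E}}$ ``directly from the definition,'' since subtraction in the field of norms is defined by a limit of norms rather than componentwise; what Proposition~\ref{ApproxAPF}(ii) gives is the congruence $(X_{L|K}(\sigma)(\hat{y})-\hat{y})_{\tilde{E}}\equiv\sigma(\hat{y}_{\tilde{E}})-\hat{y}_{\tilde{E}}\bmod\mathfrak{m}_{\tilde{E}}^{r(\tilde{E})}$. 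This does not damage your argument, because the quantity you end up computing has valuation $\nu_{\tilde{E}}(\sigma(y)-y)<r(\tilde{E})$, so the error term of valuation at least $r(\tilde{E})$ is absorbed by the same ultrametric estimate you already use for $\sigma(\delta)-\delta$; but the congruence, not an equality, is the correct citation.
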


\begin{proof}
This follows from a straightforward computation using Proposition~\ref{ApproxAPF}.~$\qed$
\end{proof}

We wish to apply this lemma with $y=y^E$ and $\hat{y}=\hat{y}^E$, so we compute
\begin{eqnarray*}
\nu_{E'}(\sigma(y^E)-y^E)\le
\nu_{E'}(\textrm{disc}(F_E))
&\le&(\deg{F})\nu_{E}(\textrm{disc}(F_E))\\
&=&(\deg{F})\nu_{X_K(L)}(\textrm{disc}(f))
\hspace{1.7cm}(\dag)
\end{eqnarray*}
for $E$ large by Lemma~\ref{disc}.
Since $r(E')\rightarrow\infty$, it follows that $y^E$ satisfies the
hypothesis of Lemma~\ref{GalNorm} for $E$ large, and we conclude that
$\nu_{X_K(L_E)}(X_{L|K}(\sigma)(\hat{y}^E)-\hat{y}^E)
=\nu_{E'}(\sigma(y^E)-y^E)$. This immediately implies that $X_K(L)(\hat{y}^E)=X_K(L_E)$, because
if the inclusion were proper, then there would exist $\sigma\ne 1$ in
$\textrm{Gal}(L_E|L)$ such that $X_{L|K}(\sigma)(\hat{y}^E)=\hat{y}^E$,
which is a contradiction since $\sigma(y^E)\ne y^E$.

Thus, in order to show that $\mathcal{K}=X_K(L_E)$, we just need to show that
$X_K(L)(\hat{y}^E)\subset X_K(L)(\tilde{\chi})$. But the net $\{\hat{y}^E\}$
converges to $\tilde{\chi}$, and (\dag) shows that the Krasner
radii
$$
\max\{\nu_{X_K(L)}(X_{L|K}(\sigma)(\hat{y}^E)-\hat{y}^E) \ | 
\ \sigma\in G(L_E|L), \sigma\ne 1\}<C
$$
for some constant $C$ independent of $E$. Hence for $E$ sufficiently large so that
$\nu_{X_K(L)}(\tilde{\chi}-\hat{y}^E)>C$, Krasner's lemma tells us that
$X_K(L)(\hat{y}^E)\subset X_K(L)(\tilde{\chi})$
as required.

Thus, we have shown that $\mathcal{K}=X_{L|K}(L_E)$ for $E\in \mathcal{C}_Y$ large. It
now follows from the fundamental equality that $n_s=n_E$:
$$
n_s=\frac{\deg{f}}{[\mathcal{K}:k((z))]}=\frac{\deg{F}}{[L_E:L]}
=\frac{\deg{F}}{[E':E]}=n_E.
$$

It remains to prove the statement about the Galois groups. By the general theory
of the field of norms, we have
$$
\textrm{Gal}(L_E|L)\cong\textrm{Gal}(X_K(L_E)|X_K(L))
=\textrm{Gal}(\mathcal{K}|X_K(L)).
$$
Moreover, since $L_E=LE'$, and $L|E$ and $E'|E$ are linearly
disjoint, it follows that
$$
\textrm{Gal}(L_E|L)=\textrm{Gal}(LE'|L)\cong
\textrm{Gal}(E'|E'\cap L)=\textrm{Gal}(E'|E).
$$
Thus, we just need to show that the ramification filtrations are preserved under
these isomorphisms.

First note that for all $E,B\in\mathcal{C}_Y$ sufficiently large, we have $L_E=L_B$, since by the preceding
proof we have that $X_K(L_E)=\mathcal{K}=X_K(L_B)$ and $X_{L|K}(-)$ is an
equivalence of categories. Denote this common field by $L'$. 

\begin{lem}\emph{(compare \cite{W}, Proposition 3.3.2)}\label{RamBehavior}
For $\sigma\in \textrm{Gal}(L'|L)$ and $E$ large, we have 
$i_{E'}(\sigma)=i_{X_K(L')}(X_{L|K}(\sigma))$. 
\end{lem}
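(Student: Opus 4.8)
The plan is to compute both $i$-functions by transporting valuations across the field-of-norms correspondence supplied by Proposition~\ref{ApproxAPF} and Lemma~\ref{GalNorm}, closely following the proof of \cite{W}, Proposition 3.3.2; the new ingredient is that everything must be done uniformly as $E$ varies over $\mathcal{C}_Y$. Since $i_{E'}(1)=i_{X_K(L')}(1)=+\infty$, I may assume $\sigma\ne 1$, so both numbers are finite integers. For $E\in\mathcal{C}_Y$ large I will freely use the facts already in hand: $E'=\tilde{E}$ and $L'=\tilde{L}_E=LE'$; the extension $\tilde{L}_E|\tilde{E}$ is totally wildly ramified, so $\nu_{X_K(L')}(\gamma)=\nu_{\tilde{E}}(\gamma_{\tilde{E}})$ for all $\gamma\in X_K(L')$; and $r(\tilde{E})\ge r(E)\to\infty$ by Propositions~\ref{APFprops} and \ref{APFinfty}. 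The key quantitative input is $(\dag\dag)$, which bounds $\nu_{\tilde{E}}(\sigma(y^E)-y^E)\le M:=(\deg F)\,\nu_{X_K(L)}(\textrm{disc}(f))$, a constant independent of $E$. In particular $\nu_{\tilde{E}}(\sigma(y^E)-y^E)<r(\tilde{E})$ for $E$ large, so Lemma~\ref{GalNorm} applies to the pair $(y^E,\hat{y}^E)$ and gives $\nu_{X_K(L')}\big(X_{L|K}(\sigma)(\hat{y}^E)-\hat{y}^E\big)=\nu_{\tilde{E}}(\sigma(y^E)-y^E)\le M$.

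First I would prove $i_{X_K(L')}(X_{L|K}(\sigma))\le i_{E'}(\sigma)$. Pick $x\in R_{E'}=R_{\tilde{E}}$ attaining the minimum that defines $i_{E'}(\sigma)$, so $\nu_{E'}(\sigma(x)-x)=i_{E'}(\sigma)+1$; testing $y^E$ against this minimum shows $i_{E'}(\sigma)+1\le M<r(\tilde{E})$ for $E$ large, hence $\nu_{\tilde{E}}(\sigma(x)-x)<r(\tilde{E})$. Applying Proposition~\ref{ApproxAPF}(i) to the totally ramified APF extension $\tilde{L}_E|\tilde{E}$, I choose $\hat{x}\in R_{X_K(L')}$ with $\nu_{\tilde{E}}(\hat{x}_{\tilde{E}}-x)\ge r(\tilde{E})$, and then Lemma~\ref{GalNorm} gives $\nu_{X_K(L')}(X_{L|K}(\sigma)(\hat{x})-\hat{x})=\nu_{\tilde{E}}(\sigma(x)-x)=i_{E'}(\sigma)+1$. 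Since $\hat{x}$ is one of the elements over which the minimum defining $i_{X_K(L')}(X_{L|K}(\sigma))$ is taken, the desired inequality follows.

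Next I would prove the reverse inequality. Pick $\hat{y}\in R_{X_K(L')}$ attaining the minimum that defines $i_{X_K(L')}(X_{L|K}(\sigma))$; testing $\hat{y}^E$ against this minimum shows it is $\le M<r(\tilde{E})$ for $E$ large, so $\nu_{X_K(L')}(X_{L|K}(\sigma)(\hat{y})-\hat{y})<r(\tilde{E})$. Set $y:=\hat{y}_{\tilde{E}}\in R_{\tilde{E}}=R_{E'}$. By Proposition~\ref{ApproxAPF}(ii), used additively exactly as in the proof of Lemma~\ref{GalNorm}, the $\tilde{E}$-component of $X_{L|K}(\sigma)(\hat{y})-\hat{y}$ is congruent to $\sigma(y)-y$ modulo $\mathfrak{m}_{\tilde{E}}^{r(\tilde{E})}$; since that component has $\nu_{\tilde{E}}$-valuation strictly less than $r(\tilde{E})$, it follows that $\nu_{\tilde{E}}(\sigma(y)-y)$ equals $\nu_{X_K(L')}(X_{L|K}(\sigma)(\hat{y})-\hat{y})=i_{X_K(L')}(X_{L|K}(\sigma))+1$. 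But $y\in R_{E'}$, so $\nu_{\tilde{E}}(\sigma(y)-y)\ge i_{E'}(\sigma)+1$, which yields $i_{E'}(\sigma)\le i_{X_K(L')}(X_{L|K}(\sigma))$. Together with the first step this proves the lemma.

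The step I expect to be the main obstacle — and the heart of the matter — is not any single inequality but the uniformity: one needs the minima defining both $i$-functions to be attained at elements whose associated valuation stays strictly below the approximation threshold $r(\tilde{E})$, since otherwise Proposition~\ref{ApproxAPF} transfers only an inequality rather than an exact value. This is precisely what the bound $(\dag\dag)$ buys us, because $r(\tilde{E})\to\infty$ forces $M<r(\tilde{E})$ once $E$ is large, and the competitor elements $y^E$ and $\hat{y}^E$ certify that the relevant minima are at most $M$.
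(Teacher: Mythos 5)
Your argument is correct, and it takes the route the paper itself intends: the paper states Lemma~\ref{RamBehavior} with only a pointer to \cite{W}, Proposition 3.3.2, and the expected proof is exactly this two-sided comparison via Proposition~\ref{ApproxAPF} and Lemma~\ref{GalNorm}, with the uniform bound $(\dag\dag)$ guaranteeing that the minimizing elements stay below the threshold $r(\tilde{E})$ once $E$ is large. Your write-up correctly identifies that uniformity as the only genuinely new point beyond Wintenberger's original argument, and both inequalities are established soundly.
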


Since the lower ramification filtration is determined by the function $i$, it
follows that the isomorphism $\textrm{Gal}(E'|E)\cong\textrm{Gal}(L'|L)\cong
\textrm{Gal}(\mathcal{K}|X_K(L))$
induced by $X_{L|K}(-)$ preserves the ramification filtrations. Since the
degree of the different depends only on the ramification filtration, it
follows that $d_s=d_E$ for $E$ large. This completes the proof of part 1.\\

\noindent
{\it Proof of part 2}\\

\noindent
Note that by part 1, if $d_E\rightarrow\infty$, then the special fiber must
be generically inseparable, without any irreducibility hypothesis.

Now suppose that $Y_k$ is irreducible and $Y_k\rightarrow D_k$ is generically
inseparable. Let $V$ be the first ramification group at the unique prime of $\mathcal{A}$ lying over 
$(\varpi)$. Taking $V$-invariants, we obtain the tower
$Y\rightarrow Y^V\rightarrow D$. Since $V$ is a nontrivial $p$-group, it has a $p$-cyclic quotient. Hence $Y\rightarrow Y^V$ has a $p$-cyclic subcover $W\rightarrow Y^V$, and we have the tower $Y\rightarrow W\rightarrow Y^V\rightarrow D$. Now consider the associated tower of special fibers
$Y_k\rightarrow W_k\rightarrow Y_k^V\rightarrow D_k,$
which
corresponds (by considering the generic points) to a chain of field extensions
$k((z))\subset k((s))\subset k((x))\subset \mathcal{K}$. Here the extension $k((x))|k((s))$ is purely inseparable of degree $p$, defined by $x^p=s$.
Note that there is no extension of constants in this tower because the cover
$Y\rightarrow D$ was assumed to be regular.

The extension $k((s))|k((z))$ is separable and totally ramified, so the minimal polynomial of
$s$ over $k((z))$ is Eisenstein:
$$
g(T)=T^c+za_{d-1}(z)T^{c-1}+\cdots +za_{1}(z)T+zu(z)\in k[[z]][T],
$$
where $u(z)$ is a unit. It follows that $g(T^p)$ is the minimal polynomial of $x$ over $k((z))$. Now
let $\xi$ be a lifting of $x$ to the localized ring of global sections $\Gamma(W)_{(\varpi)}$. Then $\xi$ is integral over
$\mathcal{A}_{(\varpi)}$ and we let $G(T)\in \mathcal{A}_{(\varpi)}[T]$ 
be its minimal polynomial. Since $\deg(W | D)=\deg (k((x)) | k((z)))=pc$ and $\xi$ is a lifting of $x$, it follows that the degree of $G(T)$ is also $pc$, and $G(T)\equiv g(T^p)$ modulo $\varpi$. Using the Teichm\"uller lifting 
$\tau:k[[z]]\rightarrow R[[Z]]$ we find that:
\begin{eqnarray*}
G(T)&=&\tau(g)(Z, T^p)+\varpi P(Z,T)\\
&=&T^{pc}+Z\tau(a_{d-1})(Z)T^{p(c-1)}+\cdots +Z\tau(u)(Z)+\varpi P(Z,T),
\end{eqnarray*}
for some polynomial $P(Z,T)\in R[[Z]]_{(\varpi)}[T]$ of degree at most $pc-1$ in $T$.

Setting $Z=\pi_E$, we get the specialized polynomial in $E[T]$
$$
G_E(T)=T^{pc}+\pi_E\tau(a_{d-1})(\pi_E)T^{p(c-1)}+\cdots +\pi_E\tau(u)(\pi_E)
+\varpi P(\pi_E,T),
$$
which for $E$ sufficiently large is Eisenstein by the Ramification Argument applied to $S(T)=\varpi P(Z,T)$. Letting $\xi_E$ denote the image of $\xi$ in
$F(W_E)$, it follows  by degree considerations that $W_E$ is irreducible for $E$ large and $\xi_E$ is a uniformizer for the field  $F(W_E)$. 

We obtain the chain of field extensions
$E\subset E(\xi_E)=F(W_E)\subset E'$, and can compute the different as follows:
$$
\mathcal{D}(E(\xi_E) | E)=(G_{E}'(\xi_E))=(p\xi_E^{p-1}\tau(g)'(\pi_E, \xi_E^p)+\varpi
P'(\pi_E,\xi_E)).
$$
But
\begin{eqnarray*}
&\nu_{E(\xi_E)}&(p\xi_E^{p-1}\tau(g)'(\pi_E, \xi_E^p)+\varpi P'(\pi_E, \xi_E))
\ge\\
&&\min\{\nu_{E(\xi_E)}(p\xi_E^{p-1}\tau(g)'(\pi_E, \xi_E^p)),
\nu_{E(\xi_E)}(\varpi P'(\pi_E,\xi_E))\},
\end{eqnarray*}
and the latter quantity goes to $\infty$ with $E$. By
multiplicativity of the different in towers we conclude that
$$
d_E\ge \nu_{E(\xi_E)}(\mathcal{D}(E(\xi_E) | E)),
$$
so $d_E$ goes to $\infty$ with $E$ as claimed.\\

\noindent
{\it Proof of part 3}\\

\noindent
We now assume that $G$ is abelian, but make no separability assumption on 
the special fiber $Y_k\rightarrow D_k$.  Since $G$
is abelian, the decomposition groups at the $n_s$
primes of $\mathcal{A}_{(\varpi)}$ lying over $(\varpi)\in
\textrm{Spec}(R[[Z]])$ all coincide. Call this decomposition group
$\mathcal{Z}$. Taking $\mathcal{Z}$-invariants, we observe
that
$Y^\mathcal{Z}\rightarrow D$ is a $G/\mathcal{Z}$-Galois 
regular branched cover with totally split special fiber:
\begin{eqnarray}\label{isom}
F(Y_k^\mathcal{Z})\cong \prod_{j=1}^{n_s}k((z)).
\end{eqnarray}
In particular, there is no further splitting in the special fiber 
$Y_k\rightarrow Y_k^\mathcal{Z}$.
Hence, we can apply part~1  to the cover $Y^\mathcal{Z}\rightarrow D$, and a review of the beginning of the proof of part 1 shows that we may take $\mathcal{C}_{Y^\mathcal{Z}}=\mathcal{E}_{L|K}$. We
conclude that $n_E^\mathcal{Z}=n_s$ for $E$ large (here $n_E^\mathcal{Z}$ is the number of components of
$Y_E^\mathcal{Z}$). Since $Y_E\rightarrow Y_E^\mathcal{Z}$ is
surjective, it follows that $n_E\ge n_E^\mathcal{Z}=n_s$ as claimed. This completes the proof of part 3, and hence of Theorem~\ref{Main}.~$\qed$

\section{A Criterion for Good Reduction and the Oort Conjecture}
\label{sec:ArithmeticOort}

In this section, we use Theorem \ref{Main} to obtain a characterization of the abelian covers of the open $p$-adic disc having good reduction to characteristic $p$. For this, we need the following

\noindent
\begin{LocGR}\emph{(\cite{K} section 5, \cite{GM} 3.4)} Let
$\mathcal{A}$ be a normal integral local ring, which is also a finite
$R[[Z]]$-module. Assume moreover that $\mathcal{A}_s:=\mathcal{A}/\varpi\mathcal{A}$ is reduced
and $\textrm{Frac}(\mathcal{A}_s)|k((z))$ is separable. Let
$\widetilde{\mathcal{A}_s}$ be the integral closure of $\mathcal{A}_s$, and define
$\delta_k:=\dim_k(\widetilde{\mathcal{A}_s}/\mathcal{A}_s)$. Also, setting
$K=\textrm{Frac}(R)$, denote by
$d_\eta$ the degree of the different of $(\mathcal{A}\otimes K)|(R[[Z]]\otimes
K)$, and by $d_s$ the degree of the different of
$\textrm{Frac}(\mathcal{A}_s)|k((z))$. Then $d_\eta=d_s+2\delta_k$, and if
$d_\eta=d_s$, then $\mathcal{A}\cong R[[T]]$.
\end{LocGR}




Now let $H$ be a finite extension of $\mathbb{Q}_p$, and fix a Lubin-Tate extension
$L|K$ as described in section \ref{Coleman}, where $K:=H\widehat{\mathbb{Q}_p^{un}}$. Then choose a uniformizer $\pi=(\pi_E)_E\in X_K(L)$, which defines an
isomorphism $\overline{\mathbb{F}}_p((z))\cong X_K(L)$ as well as a net
of points $\{x^E\}_E\subset D_K$ (see section \ref{Disc}). We will be interested in the cofinal sequence of ramification subfields $\{L^m\}_m\subset \mathcal{E}_{L|K}$, and will use the simplified notation $\pi_m:=\pi_{L^m}, x_m=x^{L^m}, L_m=L_{L^m}$, etc.

\begin{prop}\label{arithcrit}
Suppose that $G$ is a finite abelian group, and $Y\rightarrow D$ is
a $G$-Galois regular branched cover with $Y$ normal and $Y_k$ reduced. Then
$Y\rightarrow D$ has good reduction (with $Y_k$ irreducible and $Y_k\rightarrow D_k$ separable), if and only if there
exists a $G$-Galois extension $M|L$ and an integer $l>0$ such that for $m>>0$ and $m\equiv 1 \mod l$, we have $L_m=M$ as $G$-Galois extensions of $L$,
and $d_m=d_\eta$. In this case, the generic fiber of $Y_k\rightarrow D_k$ corresponds to the field extension $X_K(M)|X_K(L)$.
\end{prop}

\begin{proof}
First suppose that $Y\rightarrow D$ has good reduction with $Y_k$ irreducible and 
$Y_k\rightarrow D_k$ separable. Then by the proof of part
1 of Theorem \ref{Main} there exists $l>0$ such that for $m>>0$ 
and $m\equiv 1 \mod l$, we have $F(Y_k)=X_K(L_m)$ and $d_m=d_s$.
Since $X_K(-)$ is an equivalence of categories, we conclude that for these values of $m$, the fields $L_m$ are all equal. Let $M|L$ be this common $G$-Galois extension.
By the Local Criterion For Good Reduction,
we have $d_s=d_\eta$, which
implies that $d_m=d_\eta$ for $m>>0$ and $m\equiv 1 \mod l$, as claimed.

Now suppose that there exists $l>0$ so that 
$L_m=M$ and $d_m=d_\eta$ for $m>>0$ and $m\equiv 1 \mod l$. Then by part 3 of 
Theorem \ref{Main},
$Y_k$ is irreducible, and then by part 2, $Y_k\rightarrow D_k$ is separable. Hence we may apply part 1 to
conclude that there exists $l_1>0$ such that 
$F(Y_k)=X_K(L_m)$ and $d_s=d_m$ for $m>>0$ and $m\equiv 1
\mod l_1$. But the two arithmetic progressions $\{tl+1\}_t$ and $\{tl_1+1\}_t$
have a common subsequence. It follows that $F(Y_k)=X_K(M)$ and $d_s=d_\eta$, so
$Y\rightarrow D$ is a birational lifting of $X_K(M)|X_K(L)$ which preserves the
different. By
the Local Criterion for Good Reduction, it follows that $Y\rightarrow D$ is actually a
smooth lifting.~$\qed$
\end{proof}

As an application, we obtain an arithmetic reformulation of the Ring Specific Local Oort Conjecture from the Introduction
concerning the liftability of cyclic covers over $\overline{\mathbb{F}}_p$. Set
$K=\widehat{\mathbb{Q}_p^{un}}$, and let $L=K(\zeta_{p^\infty})$. Then $L|K$ is
Lubin-Tate for $H=\mathbb{Q}_p$ and $\Gamma=\widehat{\mathbb{G}_m}$. Moreover, if $C$
is a finite cyclic group, define
$R_C:=R_K[\zeta_{|C|}]\subset R_L$. 

\begin{ArithOortFp}
Suppose that $M | L$ is a finite cyclic
extension of $L$, with group $C$. 
Then there exists $l>0$ and a normal, $C$-Galois, regular branched cover
$Y\rightarrow D:=\textrm{Spec}(R_C[[Z]])$ such that
\begin{enumerate}
\item $Y_k$ is reduced;
\item $L_m=M$ for $m>>0$ and $m\equiv 1 \mod l$;
\item $d_\eta=d_m$ for $m>>0$ and $m\equiv 1 \mod l$.
\end{enumerate}
\end{ArithOortFp}

\begin{prop}
The Arithmetic Form of the Ring Specific Local Oort Conjecture is equivalent to the Ring Specific Local Oort Conjecture over $\overline{\mathbb{F}}_p$.
\end{prop}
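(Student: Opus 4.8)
The plan is to show that the Arithmetic Form of the Ring Specific Oort Conjecture over $\overline{\mathbb{F}}_p$ (call it \textbf{AF}) is logically equivalent to the Ring Specific Oort Conjecture over $\overline{\mathbb{F}}_p$ (call it \textbf{OC}), and the essential bridge between them is Proposition~\ref{arithcrit} specialized to $K=\widehat{\mathbb{Q}_p^{un}}$, $L=K(\zeta_{p^\infty})$, $R_C=R_K[\zeta_{|C|}]$. The point is that \textbf{AF} is phrased precisely in terms of conditions (1)--(3), which are exactly the hypotheses appearing in the ``if'' direction of Proposition~\ref{arithcrit}, while \textbf{OC} asserts liftability of a cyclic cover $f: C\rightarrow C'$ of $\overline{\mathbb{F}}_p$-curves over the specific ring $W(\overline{\mathbb{F}}_p)[\zeta_{p^e}]$. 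So the proof is really a matter of matching the two formulations through the chain: local lifting problem $\leftrightarrow$ birational local lifting with preserved different $\leftrightarrow$ the field of norms criterion of Proposition~\ref{arithcrit}.

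First I would reduce \textbf{OC} to its local form. By the Local-to-Global Principle, a cyclic $G$-Galois cover of curves lifts over a DVR $R$ iff each local cover $f_y: \mathrm{Spec}(\overline{\mathbb{F}}_p[[t]])\rightarrow\mathrm{Spec}(\overline{\mathbb{F}}_p[[z]])$ at a ramification point lifts over $R$; since inertia groups of a cyclic cover are cyclic, this is a statement about cyclic extensions $\overline{\mathbb{F}}_p[[t]]\,|\,\overline{\mathbb{F}}_p[[z]]$. Then by the Birational Criterion for Local Lifting, such a local lifting over $R_C$ exists iff there is a $C$-Galois birational lifting $\mathcal{A}\,|\,R_C[[Z]]$ of $\overline{\mathbb{F}}_p((t))\,|\,\overline{\mathbb{F}}_p((z))$ with $d_s=d_\eta$. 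Now I identify $\overline{\mathbb{F}}_p((z))$ with $X_K(L)$ via the chosen uniformizer $\pi$, so that $\overline{\mathbb{F}}_p((t))\,|\,\overline{\mathbb{F}}_p((z))$ corresponds (by the equivalence of categories, Proposition~\ref{Cats}, applied to $X_{L|K}(-)$) to a unique $C$-Galois extension $X_K(M)\,|\,X_K(L)$, i.e.\ to a $C$-Galois extension $M\,|\,L$. A birational lifting $\mathcal{A}\,|\,R_C[[Z]]$ with normal $\mathcal{A}$ and reduced special fiber is exactly a $C$-Galois regular branched cover $Y=\mathrm{Spec}(\mathcal{A})\rightarrow D=\mathrm{Spec}(R_C[[Z]])$ with $Y_k$ reduced, and ``$Y\rightarrow D$ is a smooth lifting of $X_K(M)\,|\,X_K(L)$'' is precisely the assertion that it birationally lifts $\overline{\mathbb{F}}_p((t))\,|\,\overline{\mathbb{F}}_p((z))$ and preserves the different. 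So one direction reads: \textbf{OC} holds for $M\,|\,L$ $\iff$ there exists such a $Y\rightarrow D$ that is a smooth lifting of $X_K(M)\,|\,X_K(L)$. Applying Proposition~\ref{arithcrit} converts the latter into: there exists $l>0$ such that for $m\gg0$ with $m\equiv 1\bmod l$ one has $L_m=M$ and $d_m=d_\eta$ — which, together with ``$Y_k$ reduced,'' is exactly conditions (1)--(3) of \textbf{AF}. Running this equivalence in both directions over all finite cyclic $C$ and all $C$-Galois $M\,|\,L$ gives \textbf{AF} $\iff$ \textbf{OC}.

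I would then spell out the two remaining bookkeeping points. Going from \textbf{OC} to \textbf{AF}: given a cyclic $M\,|\,L$ with group $C$, \textbf{OC} (in local birational form as above) furnishes a smooth lifting over $R_C=W(\overline{\mathbb{F}}_p)[\zeta_{|C|}]$ — note $W(\overline{\mathbb{F}}_p)=R_K$ since $K=\widehat{\mathbb{Q}_p^{un}}$, so the ring matches — and Proposition~\ref{arithcrit} (``only if'' direction) then delivers exactly (1)--(3). Going from \textbf{AF} to \textbf{OC}: given (1)--(3), Proposition~\ref{arithcrit} (``if'' direction) produces a smooth lifting of $X_K(M)\,|\,X_K(L)$ over $R_C[[Z]]$, i.e.\ a good-reduction lifting $R_C[[T]]\,|\,R_C[[Z]]$ of $\overline{\mathbb{F}}_p[[t]]\,|\,\overline{\mathbb{F}}_p[[z]]$, and then the Local-to-Global Principle patches these local liftings into a lifting of $f$ over $R_C$, which is the ring $W(\overline{\mathbb{F}}_p)[\zeta_{p^e}]$ demanded by \textbf{OC} when $|C|=p^e n$. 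The main obstacle — or rather the point requiring care — is ensuring that every cyclic local extension $\overline{\mathbb{F}}_p((t))\,|\,\overline{\mathbb{F}}_p((z))$ arises as $X_K(M)\,|\,X_K(L)$ for some $M\,|\,L$ with the \emph{same} Galois group $C$ and that this correspondence is faithful on both objects and $\mathbb{Z}/|C|$-torsors: this is guaranteed by Proposition~\ref{Cats}, since $X_{L|K}(-)$ is an equivalence of categories preserving Galois extensions and Galois groups, so every finite separable (in particular cyclic) extension of $X_K(L)=\overline{\mathbb{F}}_p((z))$ has the form $X_{L|K}(M)$ for a unique $M\,|\,L$ with matching group. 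Everything else is a transcription through the already-established criteria, so no new estimates are needed.
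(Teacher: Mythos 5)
Your proposal is correct and takes essentially the same route as the paper, whose entire proof of this proposition is the single line that it ``follows immediately from Proposition~\ref{arithcrit}''; your write-up simply unpacks the translation between the two formulations (via the Local-to-Global Principle, the Birational Criterion, and the equivalence of categories of Proposition~\ref{Cats}) that the paper leaves implicit. The one step you attribute to Proposition~\ref{Cats} that it does not actually supply is the passage from the \emph{global} conjecture to its local form in the direction ``global implies local,'' which requires realizing an arbitrary cyclic extension of $\overline{\mathbb{F}}_p((z))$ as the completion of a global cover at a ramification point (a Katz--Gabber type statement) --- but the paper glosses over this point as well, so it does not distinguish your argument from the author's.
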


\begin{proof} First assume that the Arithmetic Form holds, and 
suppose that ${W_k\cong \textrm{Spec}(k[[t]])\rightarrow D_k}$ is a $C$-Galois cover, corresponding to the field extension $F(W_k)|X_K(L)$. Since $X_K(-)$ is an equivalence of categories, there exists a unique $C$-Galois extension $M|L$ such that $F(W_k)=X_K(M)$. Let $Y\rightarrow D$ be the $C$-Galois cover furnished by the Arithmetic Form of the conjecture. Then by Proposition \ref{arithcrit}, $Y\rightarrow D$ is a smooth lifting of $W_k\rightarrow D_k$, and hence the Ring Specific Local Oort Conjecture holds. 

Conversely, assume that the Ring Specific Local Oort Conjecture holds, and suppose that $M|L$ is a $C$-Galois extension. Apply the field of norms to obtain a $C$-Galois extension $X_K(M)|X_K(L)$, corresponding to a $C$-Galois cover $Y_k\rightarrow D_k$ with $Y_k\cong \textrm{Spec}(k[[t]])$. Let $Y\rightarrow D$ be a smooth lifting of $Y_k$. Then by Proposition $\ref{arithcrit}$, conditions 1-3 of the Arithmetic Form are satisfied for the cover $Y\rightarrow D$, so the Arithmetic Form of the Ring Specific Local Oort Conjecture holds.~$\qed$
\end{proof}

\begin{rem}
It can be shown by standard techniques of model theory that the Ring Specific Local Oort Conjecture over $\overline{\mathbb{F}}_p$ implies the Ring Specific Local Oort Conjecture over $k$, where $k$ is an arbitrary algebraically closed field of characteristic $p$. 
\end{rem}

\begin{rem}
One can give a direct proof of the Arithmetic Form of the Ring Specific Local Oort Conjecture for $p$-cyclic covers over $\overline{\mathbb{F}}_p$. It is a variant of the proofs given in \cite{OSS} and \cite{GM}, using Kummer Theory in characteristic zero in place of Artin-Schreier Theory in characteristic $p$.
\end{rem}

\begin{acknowledgement}
I would like to thank Florian Pop for suggesting this line of inquiry for my Ph.D. dissertation at the University of Pennsylvania. In addition, I am grateful to the anonymous referree for many helpful comments.
\end{acknowledgement}

\noindent
Scott Corry \quad corrys@lawrence.edu\\
\noindent
Lawrence University, Department of Mathematics, 711 E. Boldt Way, Appleton, WI 54911, USA


\begin{thebibliography}{99}

\bibitem{BM} Bertin, J., M\'ezard, A.: D\'eformations formelles des
rev\^etements sauvagement ramifi\'es de courbes alg\'ebriques. Invent. Math.
\textbf{141}, 195-238 (2000)

\bibitem{Bour} Bourbaki, N.: Commutative Algebra: Chapters 1-7.
Springer-Verlag, Berlin (1989) 

\bibitem{WB} Bouw, I., Wewers, S.: The local lifting problem for dihedral
groups. Duke Math. J. \textbf{134}, 421-452 (2006)

\bibitem{CGH} Chinburg, T., Guralnick, R., Harbater, D.: Oort groups and lifting problems. Composit. Math. \textbf{114}, 849-866 (2008)



\bibitem{Ga} Garuti, M.: Prolongement de rev\^etements Galoisiens en
g\'eometrie rigide. Composit. Math. \textbf{104}, 305-331 (1996)


\bibitem{Gr} Green, B.: Realizing deformations of curves using Lubin-Tate formal groups. Israel J. Math. \textbf{xx}, 1-10 (2004)

\bibitem{GM} Green, B., Matignon, M.: Liftings of Galois covers of smooth
curves. Composit. Math. \textbf{113}, 237-272 (1998)


\bibitem{K} Kato, K. (with collaboration of T. Saito).: Vanishing cycles,
ramification of valuations, and class field theory. Duke Math. J.
\textbf{55}(3) 629-659 (1987)

\bibitem{LT} Lubin, J., Tate, J.: Formal complex multiplication in local fields. Ann. Math. \textbf{81}, 380-387 (1965)

\bibitem{N} Neukirch, J.: Algebraic Number Theory. Springer-Verlag, Berlin (1999)

\bibitem{O} Oort, F.: Lifting algebraic curves, abelian varieties, and their endomorphisms to characteristic zero. Proc. Symp. Pure Math. \textbf{46}, (1987)

\bibitem{OSS} Oort F., Sekiguch, T., Suwa, N.: On the deformation of
Artin-Schreier to Kummer. Ann. Scient. \'Ec. Norm. Sup., $4^e$ s\'erie, t.
\textbf{22}, 345-375 (1989)



\bibitem{W} Wintenberger, J.P.: Le corps des normes de certaines 
extensions infinies de corps locaux; applications. 
Ann. Scient. \'Ec. Norm. Sup., $4^{e}$
s\'erie, t. \textbf{16}, 59-89 (1983)

\end{thebibliography}
\end{document}